\newtheorem*{thm*}{Theorem}
\newtheorem*{conj*}{Conjecture}
\newtheorem*{remark}{Remark}
\newtheorem{theorem}{Theorem}[section]
\newtheorem{lemma}[theorem]{Lemma}
\newtheorem{proposition}[theorem]{Proposition}
\newtheorem*{parityconjecture}{Parity Conjecture}
\newtheorem*{TwoRemarks}{Two Remarks}
\newtheorem{definition}[theorem]{Definition}
\newtheorem{corollary}[theorem]{Corollary}
\newcommand{\CL}{\mathrm{CL}}
\newcommand{\Z}{\mathbb{Z}}
\newcommand{\Q}{\mathbb{Q}}
\newcommand{\R}{\mathbb{R}}
\newcommand{\SL}{\operatorname{SL}}
\newcommand{\divides}{\mid}
\DeclarePairedDelimiterX{\pmodx}[1]{(}{)}{{\operator@font mod}\mkern6mu#1}
\renewcommand{\pmod}{%
  \allowbreak
  \if@display\mkern18mu\else\mkern8mu\fi
  \pmodx
}
\numberwithin{equation}{section}
\begin{document}
\title{On class numbers, torsion subgroups, and quadratic twists of elliptic curves}

\author[T. Blum, C. Choi, A. Hoey, J. Iskander, K. Lakein, and T. Martinez]{Talia Blum, Caroline Choi, Alexandra Hoey, Jonas Iskander, Kaya Lakein, and Thomas C. Martinez }
\address{Department of Mathematics, Massachusetts Institute of Technology, Cambridge, MA 02139}
\email[T. Blum]{taliab@mit.edu}
\email[A. Hoey]{ahoey@mit.edu}
\address{Department of Mathematics, Stanford University, Stanford, CA 94305}
\email[C. Choi]{cchoi1@stanford.edu}
\email[K. Lakein]{epi2@stanford.edu}
\address{Department of Mathematics, Harvard University, Cambridge, MA 02138}
\email[J. Iskander]{jonasiskander@college.harvard.edu}
\address{Department of Mathematics, Harvey Mudd College, Claremont, CA 91711}
\email[T. Martinez]{tmartinez@hmc.edu}

% Modify last sentence.
% Specify 'explicit lower bound' and add appropriate context.
% Include parity conjecture.
%
%Things to check/change: \\
%1. proof of thm. 2.1 (get rid of set def. \\
%2. Absolute values for little a throughout section 2 \\
%3. Should we get rid of Mazur's thrm in Intro? \\
%4. Should we mention Bharghava cubes in our Intro (buzzwords hehe)? \\
%5. Better title?

% Capitalize parity conjecture, Goldfeld's conjecture, etc.
\begin{abstract}
\begin{comment}
The Mordell-Weil groups $E(\Q)$ of elliptic curves influence the structures of their quadratic twists $E_{-D}(\Q)$ and the ideal class groups $\CL(-D)$ of imaginary quadratic fields. We define a family of homomorphisms $\Phi_{u,v}: E(\mathbb{Q}) \rightarrow \CL(-D)$ for particular negative fundamental discriminants $-D:=-D_E(u,v)$, which we use to simultaneously address questions related to lower bounds for class numbers, the structures of class groups, and ranks of quadratic twists. Specifically, let $r$ be the rank of $E(\Q)$, and define $\Psi_E$ to be the set of suitable fundamental discriminants $-D<0$ with positive integers $u, v$ such that (i) the point $\big(-\frac{u}{v},\frac{1}{v^{2}}\big) \in E_{-D}(\Q)$ has infinite order, (ii) $E_{\mathrm{tor}}(\Q)$ is a subgroup of $\CL(-D)$, and (iii) for an explicit function $f_E(u,v)$ and a computable constant $c(E),$ we have the lower bound
\[
h(-D) \geq f_E(u, v), \quad  \text{ where } \quad f_E(u,v) \sim c(E) \log (D)^{\frac{r}{2}}
\]
as $D \to \infty$. Then for any $\varepsilon > 0$, we show that as $X \to \infty$, we have
    $$\#\, \left\{-X < -D < 0: -D \in \Psi_E  \right \} \, \gg_{\varepsilon} X^{\frac{1}{2}-\varepsilon}.$$
In particular, if $\ell \in \{3,5,7\}$ and $\ell \mid |E_{\mathrm{tor}}(\Q)|$, then the number of such discriminants $-D$ for which $\ell \divides h(-D)$ is $\gg_{\varepsilon} X^{\frac{1}{2}-\varepsilon}.$
Moreover, assuming the Parity Conjecture, our results hold with the additional condition that $r_\Q(E_{-D}) \geq 2$.
\end{comment}
The Mordell-Weil groups $E(\Q)$ of elliptic curves influence the structures of their quadratic twists $E_{-D}(\Q)$ and the ideal class groups $\CL(-D)$ of imaginary quadratic fields. For appropriate $(u,v) \in \Z^2$, we define a family of homomorphisms $\Phi_{u,v}: E(\mathbb{Q}) \rightarrow \CL(-D)$ for particular negative fundamental discriminants $-D:=-D_E(u,v)$, which we use to simultaneously address questions related to lower bounds for class numbers, the structures of class groups, and ranks of quadratic twists. Specifically, given an elliptic curve $E$ of rank $r$, let $\Psi_E$ be the set of suitable fundamental discriminants $-D<0$ satisfying the following three conditions: the quadratic twist $E_{-D}$ has rank at least 1; $E_{\text{tor}}(\Q)$ is a subgroup of $\mathrm{CL}(-D)$; and $h(-D)$ satisfies an effective lower bound which grows asymptotically like $c(E) \log (D)^{\frac{r}{2}}$ as $D \to \infty$. Then for any $\varepsilon > 0$, we show that as $X \to \infty$, we have
    $$\#\, \left\{-X < -D < 0: -D \in \Psi_E  \right \} \, \gg_{\varepsilon} X^{\frac{1}{2}-\varepsilon}.$$
In particular, if $\ell \in \{3,5,7\}$ and $\ell \mid |E_{\mathrm{tor}}(\Q)|$, then the number of such discriminants $-D$ for which $\ell \divides h(-D)$ is $\gg_{\varepsilon} X^{\frac{1}{2}-\varepsilon}.$
Moreover, assuming the Parity Conjecture, our results hold with the additional condition that the quadratic twist $E_{-D}$ has rank at least 2.
\end{abstract}

\maketitle

\vspace{-0.5cm}
\section{Introduction and statement of results}\label{sec1}

%Gauss 
%Gauss proved that ideal class groups $\CL(-D)$ of imaginary quadratic fields $\mathbb{Q}(\sqrt{-D})$ are finite abelian groups isomorphic to the groups of positive definite integral binary quadratic forms of discriminant $-D <0$. 
Ideal class groups $\CL(-D)$ of imaginary quadratic fields $\mathbb{Q}(\sqrt{-D})$ are finite abelian groups isomorphic to the groups of positive definite integral binary quadratic forms of discriminant $-D <0$ studied by Gauss. 
%(bounds) Siegel. Goldfeld-Gross Zagier
Although Gauss conjectured that the class number $h(-D)$ tends to infinity as $D \rightarrow \infty,$ no lower bound on class numbers was established until the 1930s, when Siegel \cite{Siegel} proved that for any $\varepsilon > 0$, there exist constants $c_1(\varepsilon), c_2(\varepsilon) > 0$ for which 
    $$c_1(\varepsilon)\,D^{\frac{1}{2}-\varepsilon} \leq h(-D) \leq c_2(\varepsilon)\,D^{\frac{1}{2}+\varepsilon}.$$
However, because the constant $c_1(\varepsilon)$ depends on the truth or falsity of the Generalized Riemann Hypothesis, Siegel's lower bound is not effective. In the 1980s, Goldfeld \cite{Goldfeld1, Goldfeld2}, Gross and Zagier \cite{GrossZagier}, and Oesterl\'e \cite{Oesterle} used deep results on the Birch and Swinnerton-Dyer Conjecture to prove the effective lower bound
    \begin{equation}\label{GrossZagier-bound}
        h(-D) > \frac{1}{7000}\,\log(D) \prod_{\substack{p\,|\,D\text{ prime} \\
        p\,\neq\, D}} \left(1-\frac{\lfloor 2\sqrt{p}\rfloor}{p+1}\right).
    \end{equation}
    
% Specify how the lower bounds are improved with rank at least 3
Recent work improves on this bound by exploiting \textit{ideal class pairings} $E(\Q) \times E_{-D}(\Q) \to \CL(-D),$ first studied by Buell, Call, and Soleng \cite{Buell, BuellCall, Soleng}. Griffin, Ono, and Tsai \cite{G-O, G-O-T} obtain an effective lower bound of the form $h(-D) \geq c_1(E) \log(D)^{\frac{r}{2}}$ for certain families of discriminants, which improves on~\eqref{GrossZagier-bound} when the rational Mordell-Weil rank of the elliptic curve $r := r_\Q(E) \geq 3$.

%(twist) Goldfeld, Stewart Top, Gouvea Mazur
% Mention lower bound (not upper bound)
A famous conjecture of Goldfeld asserts that for a given elliptic curve, asymptotically half of all quadratic twists have rank 0 (resp.\@ 1), which raises the question of how many quadratic twists have rank at least $2$  \cite{GoldfeldConjecture}. For any elliptic curve $E$ with $j(E) \neq 0, 1728,$ Stewart and Top \cite[Theorem~3]{StewartTop} unconditionally prove a lower bound of the form
    $$\# \{ -X<-D<0 \ : \ r_{\Q}(E_{-D})\geq 2\} \geq c_2(E) \cdot \frac{X^{\frac{1}{7}}}{\log(X)^2},$$
where $c_2(E)$ is a constant depending on the elliptic curve. However, improved lower bounds can be obtained by assuming the Parity Conjecture.

\begin{parityconjecture}
Let $E/\Q$ be an elliptic curve with Hasse-Weil $L$-function $L(E,s)$, and let \(\epsilon \in \{\pm 1\}\) be the sign of its functional equation. Then \(r_\Q(E) \equiv 0 \pmod{2}\) if and only if \(\epsilon = 1\).
\end{parityconjecture}

\noindent Gouv\^ea and Mazur \cite[Theorem~2]{GouveaMazur} assume the Parity Conjecture and obtain the lower bound
    $$\# \{ -X<-D<0 \ : \ r_{\Q}(E_{-D})\geq 2\} \gg_{\varepsilon} X^{\frac{1}{2}-\varepsilon}.$$
Moreover, assuming the Parity Conjecture, Griffin, Ono, and Tsai prove for a particular family of elliptic curves that the number of fundamental discriminants $-X < -D < 0$ for which their lower bound for $h(-D)$ applies and the rank of $E_{-D}$ is at least $2$ is asymptotically greater than $X^{\frac12-\varepsilon}$ \cite[Theorem~1.2]{G-O-T}.

% In addition to using pairings to do ..., it is natural to ask...
In addition to using ideal class pairings to obtain lower bounds for class numbers and ranks of quadratic twists, it is natural to ask whether they can also be used to study the structures of class groups. In the 1980s, Cohen and Lenstra \cite{C-L} conjectured that for any odd\footnote{
% Make more precise: "Gauss' genus theory shows..."
For fundamental discriminants $-D < 0$, Gauss's genus theory shows that $h(-D)$ is odd if and only if we have $D = -4, D= -8$, or $D = -p$ for some prime $p \equiv 3$ mod $4$.} prime $\ell$,
    $$\lim_{X\,\to\,\infty} \frac{\#\{-X < -D < 0 :  \ell \divides h(-D),\; -D\text{ fundamental}\}}
    {\#\{-X < -D < 0 :-D\text{ fundamental}\}} = 1- \prod_{k\,=\,1}^\infty \left(1 - \frac{1}{\ell^k}\right).$$
However, little is known about the truth of their conjecture. Davenport and Heilbronn \cite{DavenportHeilbronn} proved a lower bound on the density of the class numbers $h(-D)$ which are not divisible by 3, and Kohnen and Ono \cite{KohnenOno} proved a lower bound for the number of $h(-D)$ not divisible by any odd prime. The current best lower bound for the number of $h(-D)$ divisible by an odd prime $\ell$ is due to Soundararajan \cite{Soundararajan}:
$$\#\{-X < -D < 0 :\ell \divides h(-D),\;-D\text{ fundamental}\} \gg X^{\frac{1}{2}+\frac{1}{\ell}}.$$
Note that these lower bounds fall short of a positive proportion of the negative fundamental discriminants, and that $X^{\frac{1}{2}}$ is considered the current standard.

In this paper, we prove a result that simultaneously addresses lower bounds for class numbers, the structures of class groups, and ranks of quadratic twists.
We consider elliptic curves of the form $E: y^2 = x^3 + a_4x + a_6$ with $a_4, a_6 \in \Z$, and their quadratic twists given by the non-standard model
$$E_{-D}: -\frac{D}{4}\cdot y^2 = x^{3} +a_4x+a_6.$$
To prove our result, we study a family of maps
    $$\Phi_{u,v}\colon \,E(\Q)\to \CL(-D_E(u,v))$$
defined in Section~\ref{sec2}, where $-D_E(u,v)$ is the family of negative discriminants defined by $$-D_E(u,v) = -4d_E(u,v) := -4v\,(u^3+a_4\,uv^2-a_6\,v^3), \qquad u,v \in \Z^+.$$ Here, $D_E(u,v)$ and $\Phi_{u,v}$ are defined in terms of the coefficients $a_4$ and $a_6$, and hence depend on the particular model for the elliptic curve.

We define a notion of \textit{map-suitable pairs} $(u,v) \in \Z^2$ in Section~\ref{sec2}, and we prove that for all such pairs $(u,v)$, the map $\Phi_{u,v}$ is well-defined with the following property.

\begin{theorem}\label{Theorem : Momomorphism}
If the pair $(u,v) \in \Z^2$ is map-suitable for $E: y^2 = x^3+a_4x+a_6$, then the map $\Phi_{u, v}:E(\Q) \rightarrow \CL(-D_E(u,v))$ is a homomorphism.
\end{theorem}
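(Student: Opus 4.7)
Because the explicit construction of $\Phi_{u,v}$ is deferred to Section~\ref{sec2}, my plan presumes that $\Phi_{u,v}(P)$ is the ideal class (equivalently, the $\SL_2(\Z)$-equivalence class of a primitive positive-definite integral binary quadratic form) of discriminant $-D_E(u,v)$ whose coefficients are explicit polynomial expressions in $u$, $v$, and the affine coordinates of $P \in E(\Q)$, with the point at infinity sent to the principal class. The map-suitability hypothesis on $(u,v)$ is precisely what guarantees that each associated form is primitive and has the correct discriminant, so set-theoretic well-definedness comes for free from the hypothesis.

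I would begin with the two easy structural facts. By convention the point at infinity is sent to the identity class. For any affine point $P=(x,y)$, the involution $P \mapsto -P = (x,-y)$ changes only the sign of the middle coefficient of the associated form, $(A,B,C) \leftrightarrow (A,-B,C)$, and the classical theory of binary quadratic forms then gives $\Phi_{u,v}(-P) = \Phi_{u,v}(P)^{-1}$ in $\CL(-D_E(u,v))$.

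The core step is additivity, $\Phi_{u,v}(P_1+P_2) = \Phi_{u,v}(P_1)\cdot\Phi_{u,v}(P_2)$. My strategy is to work in the imaginary quadratic order of discriminant $-D_E(u,v)$, view each form as an ideal, and establish the relation $[I_{P_1}]\cdot[I_{P_2}]\cdot[I_{-(P_1+P_2)}] = 1$ directly, from which the inverse law of the previous step yields the desired equality. The key algebraic input is the chord-and-tangent law: the line $y = \lambda x + \mu$ through $P_1$ and $P_2$ (tangent at $P_1$ when $P_1 = P_2$) meets $E$ in $-(P_1+P_2)$, and substitution into the Weierstrass equation produces the cubic identity
\[
  x^3 + a_4 x + a_6 - (\lambda x + \mu)^2 = (x - x(P_1))(x - x(P_2))(x - x(P_1+P_2)).
\]
Matching the coefficients of this cubic against the Gauss composition of the three forms associated to $P_1$, $P_2$, and $-(P_1+P_2)$ should produce an explicit element $\alpha \in \Q(\sqrt{-D_E(u,v)})$, built from $\lambda$ and $\mu$, that witnesses the product of the three corresponding ideals as principal.

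The main obstacle I foresee is the explicit ideal-multiplication bookkeeping in the additivity step: reducing the product $I_{P_1}\cdot I_{P_2}$ to a two-generator form in the quadratic order, recognizing $\alpha$ as a generator of the quotient by $I_{-(P_1+P_2)}$, and verifying that the map-suitability conditions on $(u,v)$ exclude the degenerate configurations that would otherwise obstruct the argument, namely forms that fail to be pairwise coprime in the order, collisions among the $x$-coordinates of $P_1$, $P_2$, and $-(P_1+P_2)$, and the edge case $P_1+P_2 = \infty$. I expect to streamline most of the algebra by staying in the $(A,B,C)$-presentation and invoking Gauss composition, and to treat the degenerate configurations by a short separate case analysis that leverages the conditions packaged into the definition of map-suitable.
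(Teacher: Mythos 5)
Your overall architecture --- send $\mathcal{O}$ to the principal class, get $\Phi_{u,v}(-P)=\Phi_{u,v}(P)^{-1}$ by negating the middle coefficient, and reduce additivity to showing that three collinear points map to classes whose product is trivial, using the cubic identity obtained by comparing $x^3+a_4x+a_6-(\lambda x+\nu)^2$ with $\prod_i\bigl(x-x(P_i)\bigr)$ --- matches the skeleton of the paper's argument, and your route through ideal multiplication and an explicit principal witness $\alpha$ is the classical Buell--Soleng-style alternative to what the paper actually does. The paper instead invokes Bhargava's cube formulation of composition: for $P_1+P_2+P_3=\mathcal{O}$ it exhibits an explicit $2\times2\times2$ integer cube (Lemma~\ref{TheoremBhargava}) whose three associated forms are exactly the forms $\Phi_{u,v}(P_i)$, so the three classes sum to the identity by construction; your collinearity identity enters there only as the source of the algebraic identities needed to check the cube is integral.

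However, your proposal stops exactly where the real work begins, and that is a genuine gap rather than routine bookkeeping. In the paper the entire difficulty is the integrality of the cube entries $\phi_i$ and $\theta$, which requires the six identities and two divisibility conditions of Lemma~\ref{lemma:observations} --- all extracted from the collinearity identity, but only after introducing the auxiliary data $g_i=\gcd(C_i,v)$ and B\'ezout coefficients $\mu_i,\ell_i$ with $C_i^3\mu_i+va_i\ell_i=g_i^2$, followed by a delicate term-by-term divisibility analysis (the paper credits Zhang for these conditions). Your ideal-theoretic version faces the same obstruction in different clothing: the forms here are not the standard pairing forms, their leading coefficients $va_i/g_i^2$ need not be pairwise coprime, the middle coefficients involve the inverse of $C_i^3/g_i^2$ modulo $va_i/g_i^2$, and even well-definedness of a single form (that $va/g^2$ divides $\mu^2B^2v^4/g^4+d_E(u,v)$) rests on the identity $B^2v^4+C^6d_E(u,v)=va\,(A^2v^2-AC^2uv+C^4u^2+a_4C^4v^2)$, which your plan never surfaces. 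Producing the generator $\alpha$ and verifying the two-generator ideal product in the presence of the $g_i$ factors is the analogue of Lemma~\ref{TheoremBhargava}, and without such identities (or equivalent congruences) there is no indication the computation closes. One smaller correction: map-suitability alone does not give primitivity or well-definedness; the paper additionally assumes $-D_E(u,v)$ is a fundamental discriminant, which is what forces $v$ squarefree, $\gcd(u,v)=1$, and primitivity of the forms --- your coprimality and degenerate-case analysis would need this hypothesis as well, while map-suitability is what gives positive definiteness.
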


To state our applications of Theorem~\ref{Theorem : Momomorphism}, we first fix some notation. Recall that for $x = \frac{m}{n} \in \Q,$ where $\gcd(m,n)=1$, the \textit{na\"ive height} of $x$ is defined by $H(x) := \max(|m|, |n|),$ and the \textit{Weil height} is $h_W(x) := \log(H(x)).$ If $P = (x,y) = \left(\frac{A}{C^2}, \frac{B}{C^3}\right) \in E(\mathbb{Q})$
with $\gcd(A,C) = \gcd(B,C) = 1$, the \textit{na\"ive height} of $P$ is defined by $H(P) := H(x)$, its \textit{Weil height} is $h_W(P) := h_W(x)$, and its \textit{canonical height} is given by
%If $P = (x,y) = \left(\frac{A}{C^2}, \frac{B}{C^3}\right) \in E(\mathbb{Q})$, where $\gcd(A,C) = \gcd(B,C) = 1$, then the \textit{na\"ive height} of $P$ is defined by $H(P) = H(x) := \max(|A|, |C^2|)$, its \textit{Weil height} is $h_W(P) := \log(H(P))$, and its \textit{canonical height} is given by
$$\hat{h}(P) := \frac{1}{2} \cdot \lim_{n\rightarrow\infty} \frac{h_W(nP)}{n^2}.$$
We denote the $j$-invariant and discriminant of an elliptic curve $E$ by $j(E)$ and $\Delta(E)$, respectively. Let $\Omega_r:=\pi^{\frac{r}{2}}/\Gamma\left (\frac{r}{2}+1\right)$ denote the volume of the $\R^r$-unit ball, and let $\mathcal{P} = \{P_1, \dots, P_r\}$ be a set of $r$ linearly independent points of infinite order in $E(\Q)$. We define the \textit{regulator} and the \textit{diameter} of $\mathcal{P}$ by \begin{equation}\label{eq: regulator and diameter}
    R_\Q(\mathcal{P}) := \det(\langle P_i, P_j \rangle)_{1\, \leq \,i\,,\,j\, \leq\, r} \quad \text{and} \quad d(\mathcal{P}) := \max_{\delta_i \in \{0\,,\,\pm 1\}} 2\hat{h}\left(\sum_{i=1}^r\delta_i P_i\right),
\end{equation} respectively, where $\langle P_i, P_j \rangle := \frac12 \left(\hat{h}(P_i+P_j)-\hat{h}(P_i)-\hat{h}(P_j)\right)$ denotes the N\'eron-Tate height pairing. For notational convenience, we also define the constants
\begin{equation}\label{eq: cE}
c_G(\mathcal{P}):=\frac{|G|}{\sqrt{R_{\Q}(\mathcal{P})}}\cdot \Omega_r \quad \text{and} \quad  \delta(E) := \frac{1}{8}\,h_W(j(E))+\frac{1}{12}\,h_W(\Delta(E))+\frac{5}{3}
\end{equation} 
for a subgroup $G$ of $E_{\mathrm{tor}}(\Q)$.  For $0 < \varepsilon < \frac{1}{2}$, let
\begin{equation}\label{T}
 T_{E}(u,v, \varepsilon) := \frac{1}{8}\,\log \left(\frac{4\,d_E(u,v)^{1-\varepsilon}}{uv+v^2}\right) - \frac{\delta(E)}{4}.
\end{equation}
Using the homomorphism $\Phi_{u,v}$ defined in Section \ref{sec2}, we obtain the following result.

% Have a discussion of the Parity conjecture.
\begin{theorem}\label{BigTheorem}
Suppose $\mathcal{P}$ is a set of linearly independent points in $E(\Q)$ and $G$ is a subgroup of $E_{\mathrm{tor}}(\Q)$. Let $0 < \varepsilon_1 < \frac 12$ and $0 \leq \alpha < \frac{1}{2} - \varepsilon_1$, and let $\Psi_E$ denote the set of fundamental discriminants $-D_E(u,v)<0$ with $ u, v > 0$ such that the following are true.
\begin{enumerate}
\item The point $\left(-\frac{u}{v},\frac{1}{v^{2}}\right) \in E_{-D_E(u,v)}(\Q)$ has infinite order.
\item We have that $h(-D_E(u,v)) \geq c_G(\mathcal{P})\cdot \big(T_E(u,v, \varepsilon_1)^{\frac{r}{2}} - r\sqrt{d(\mathcal{P})} \cdot T_E(u,v, \varepsilon_1)^{\frac{r-1}{2}}\big)$, where $T_E(u, v, \varepsilon_1) > \frac{\alpha}{8}\log(d_E(u, v)) + \frac{d(\mathcal{P})}{4}$.
\item The class group $\CL(-D_E(u,v))$ contains a subgroup isomorphic to $E_{\mathrm{tor}}(\Q)$.
\end{enumerate}
Then for any $\varepsilon_2 > 0$, we have
\begin{equation}\label{bigboy}
 \#\, \left\{-X < -D < 0: -D \in \Psi_E \right\} \, \gg_{\varepsilon_2} X^{\frac{1}{2}-\varepsilon_2}.
\end{equation}
Moreover, if the conductor \(N(E)\) is not a perfect square and $4N(E)$ divides $a_4$ and $a_6$, then assuming the Parity Conjecture, we may also require that $r_{\Q}(E_{-D_E(u,v)}) \geq 2$.
\end{theorem}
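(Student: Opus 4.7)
The plan is to construct at least $X^{1/2-\varepsilon_2}$ pairs $(u,v) \in \Z_{>0}^2$ that yield distinct fundamental discriminants $-D_E(u,v) \in \Psi_E$, then pass from a pair-count to a discriminant-count via a divisor-type fiber bound. The argument combines a squarefree sieve (to ensure fundamentality), a Minkowski-type lattice-point count in Néron--Tate height balls (to control $|\mathrm{im}(\Phi_{u,v})|$), and, for the parity-conditional statement, a local root-number calculation.

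\textbf{Box and sieve.} Restrict to pairs with $Y \leq u, v \leq 2Y$ for $Y \asymp X^{1/4}$, so that $d_E(u,v) = v(u^3 + a_4 uv^2 - a_6 v^3) \asymp X$; this box contains $\asymp X^{1/2}$ pairs. Impose three filters, each of which survives a positive density of the box: (a) the map-suitability conditions of Section~\ref{sec2} (coprimality and residue conditions on $(u,v)$); (b) fundamentality of $-D_E(u,v)$, which, since $d_E(u,v)$ factors as the linear form $v$ times an irreducible cubic binary form (generically in $a_4, a_6$), follows from a squarefree sieve for cubic binary forms (e.g.~Greaves's theorem) together with elementary handling of the linear factor, their joint coprimality, and congruences modulo $8$; (c) a mild congruence ensuring that the required local hypotheses at primes dividing $\Delta(E)$ are met. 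The intersection retains $\gg X^{1/2}$ pairs.

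\textbf{Verification of the three conditions.} Theorem~\ref{Theorem : Momomorphism} gives the homomorphism $\Phi_{u,v} : E(\Q) \to \mathrm{CL}(-D_E(u,v))$. For condition~(1), the point $P_0 = (-u/v, 1/v^2)$ has $x$-coordinate of naive height $\asymp Y$, and since rational torsion on $E_{-D}$ is uniformly bounded (Mazur) and torsion $x$-coordinates satisfy finitely many algebraic equations, $P_0$ is of infinite order outside a thin exceptional set. For condition~(3), $\Phi_{u,v}$ restricted to $E_{\mathrm{tor}}(\Q)$ is injective by a standard height argument: torsion points have bounded canonical height whereas nontrivial kernel elements must have height of order $\log d_E(u,v)$ once $D_E(u,v)$ is large. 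For condition~(2), the bound $h(-D_E(u,v)) \geq |\mathrm{im}(\Phi_{u,v})|$ reduces to counting the sublattice generated by $G \cup \mathcal{P}$ modulo $\ker\Phi_{u,v}$. The kernel lies in a naive-height ball of radius essentially $d_E(u,v)^{1-\varepsilon_1}/(uv+v^2)$, and Silverman's height-comparison $|\hat h - \tfrac{1}{2} h_W| \leq \delta(E)$ (which is the source of the definition of $\delta(E)$ and $T_E(u,v,\varepsilon_1)$) translates this into a Néron--Tate ball of radius $\sqrt{T_E(u,v,\varepsilon_1)}$. A Minkowski-type lattice-point count with main term $|G|\,\Omega_r\, T_E^{r/2}/\sqrt{R_\Q(\mathcal{P})}$ and surface-area error governed by $d(\mathcal{P})$ produces exactly the claimed expression $c_G(\mathcal{P})\bigl(T_E^{r/2} - r\sqrt{d(\mathcal{P})}\,T_E^{(r-1)/2}\bigr)$, which is nontrivial precisely because $T_E > d(\mathcal{P})/4$.

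\textbf{Passage to discriminants and parity.} A standard divisor-type estimate gives $\#\{(u,v) : D_E(u,v) = D\} \ll_\varepsilon D^\varepsilon$, so the $\gg X^{1/2}$ surviving pairs yield $\gg_{\varepsilon_2} X^{1/2-\varepsilon_2}$ distinct discriminants in $\Psi_E$. Under the additional hypotheses $4N(E) \mid a_4, a_6$ and $N(E)$ non-square, the local root numbers of $L(E_{-D}, s)$ at primes dividing $N(E)$ are fixed and the behavior at $2$ is controlled by $-D \bmod 8$; imposing a further congruence selects a positive-density subset of the surviving pairs for which the global root number equals $+1$. Under the Parity Conjecture this forces $r_\Q(E_{-D})$ to be even, and combined with condition~(1) yields $r_\Q(E_{-D}) \geq 2$. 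The main obstacle is arranging all of the sieve, congruence, and root-number conditions \emph{simultaneously} on a single positive-density subset of the $X^{1/4}$-box, uniformly in the sieve error terms; the cubic squarefree sieve with congruence constraints is the most delicate ingredient, but it is within the scope of Greaves-type bounds.
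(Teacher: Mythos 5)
Your proposal follows the same overall strategy as the paper (the homomorphism $\Phi_{u,v}$ plus suitability conditions, a squarefree sieve for fundamentality, a divisor bound on fibers of $(u,v)\mapsto D_E(u,v)$, and a root-number congruence plus the Parity Conjecture for the rank-$\geq 2$ statement), but it has a genuine gap at the central counting step. First, the box $Y\leq u,v\leq 2Y$ need not contain any admissible pairs: if $a_6$ is large and positive, then $u^3+a_4uv^2-a_6v^3<0$ whenever $u/v\leq 2$, so $d_E(u,v)<0$ and map-suitability fails on your entire box; the claim that this box contains $\asymp X^{1/2}$ pairs with $d_E\asymp X$ is false in general. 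Second, and more seriously, your plan is to impose several conditions (map-suitability, squarefreeness with congruence constraints, and the root-number congruence), each of positive density, and to conclude that their intersection still has $\gg X^{1/2}$ pairs. Positive density of each condition does not give a positive-density intersection, and you yourself flag this simultaneity as ``the main obstacle'' without resolving it; that is precisely the missing idea rather than a routine technicality. The paper resolves it with the substitution $(u,v)=(x+ny,y)$ for a sufficiently large integer $n$ (taken divisible by $4N(E)$ for the parity part): Lemma~\ref{lemma: bound and kernel suitable} shows that for such $n$ the map-, kernel-, and bound-suitability conditions fail for only $O_\varepsilon(Y^{2-\varepsilon})$ pairs in the full box $0<x,y<Y$, so the only positive-density input needed is Theorem~\ref{Gouvea-Mazur}, whose built-in congruence conditions modulo $4$ (resp.\ modulo $4N(E)$) simultaneously deliver fundamentality of $-D_E$ and the character value $\chi_{-d_E}(-N(E))=\epsilon$, with no localized sieve or density juggling.

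Three secondary points. (i) You invoke a squarefree sieve for an irreducible cubic binary form ``generically in $a_4,a_6$''; irreducibility fails exactly when $E$ has rational $2$-torsion, whereas what is needed (and all that Theorem~\ref{Gouvea-Mazur} requires) is that $v(u^3+a_4uv^2-a_6v^3)$ have no square polynomial factor, which follows from nonsingularity of $E$. (ii) Condition (2) of the theorem also requires $T_E(u,v,\varepsilon_1)>\frac{\alpha}{8}\log(d_E(u,v))+\frac{d(\mathcal{P})}{4}$; your sketch never mentions $\alpha$ — the paper obtains this from $(\varepsilon_1+\alpha)$-bound-suitability, which is why the hypothesis $\alpha<\frac12-\varepsilon_1$ appears. (iii) For the parity statement, the local root numbers at $p\mid N(E)$ are not ``fixed'': they vary with $D$ through $\chi_{-D}(p)$, and the hypotheses that $N(E)$ is not a square and $4N(E)\mid a_4,a_6$ are exactly what allow one to choose residue classes (using $d_E(x+ny,y)\equiv x^3y$ modulo $4N(E)$) forcing $\chi_{-d_E}(-N(E))=\epsilon$ and hence global root number $+1$. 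Your remaining ingredients — injectivity of $\Phi_{u,v}$ on torsion, the lattice-point count yielding the class-number bound, the divisor bound on fibers, and ``root number $+1$ plus a point of infinite order plus parity gives rank at least $2$'' — do match the paper's Sections 2--4 (Corollary~\ref{injective}, Theorem~\ref{suitability}, and the proof of Theorem~\ref{BigTheorem}).
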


\begin{TwoRemarks}  \ \

\noindent (1)  The condition that $4N(E) \mid a_4,a_6$ can be guaranteed by choosing an appropriate model for \(E\). 

\noindent (2) The result in the abstract follows from Theorem~\ref{BigTheorem} by letting $\mathcal{P} \subseteq E(\Q)$ be a set of $r$ linearly independent points such that $E(\Q) = \langle \mathcal{P} \rangle \oplus E_{\mathrm{tor}}(\Q)$, setting $G := E_{\mathrm{tor}}(\Q)$, and choosing $\alpha > 0$.
%In practice, the condition that \(N(E)\) is not a perfect square may also be avoided by introducing an auxiliary twist into the proof. %
\end{TwoRemarks}

To demonstrate that it is easy to find examples for which the bound given in Theorem~\ref{BigTheorem} improves on \eqref{GrossZagier-bound}, we apply Theorem~\ref{BigTheorem} to several explicit infinite families of elliptic curves with high rank and specified torsion subgroup. Recall that Mazur's Theorem \cite{Mazur} completely classifies all torsion subgroups of elliptic curves over $\Q$: 
\[
    E_{\text{tor}}(\Q) = \begin{cases}
    \Z/k\Z, &\text{for } k = 2,\dots, 10, \text{ and } 12, \\
    \Z/2\Z \times \Z/2m\Z, &\text{for } m=1,\dots,4. 
    \end{cases}
\] 
Infinite families of elliptic curves with positive rank are known for all torsion subgroups except $\Z/9\Z$, $\Z/10\Z$, $\Z/12\Z$, and $\Z/2\Z \times \Z/8\Z$ \cite{Dujella}. Using the infinite families described in \cite{Dujella, DujellaPeral, DujellaPeral2, Elkies,pElkies, Kihara22, Kihara4, Kulesz}, we obtain explicit lower bounds for $h(-D)$ that often improve on \eqref{GrossZagier-bound}. 
%In the following theorem, we indicate the ???. Below, we compute our lower bound for $h(-D)$ explicitly using the constants in the table for a particular elliptic curve from two of the families and compare it to \eqref{GrossZagier-bound}.

\begin{theorem}\label{TheoremInfiniteFamilies}
Let $\mathcal{S} := \{\Z/n\Z : 2 \leq n \leq 8\} \cup \{\Z/2\Z \times \Z/2n\Z : 1\leq n \leq 3\}$. Then for each group $G \in \mathcal{S}$, there exists an infinite family\footnote{We give a link to explicit formulas for these families of elliptic curves in Appendix~\ref{Appendix}.} $\mathcal{E}_G$ of elliptic curves such that the following conditions hold: \begin{enumerate}
    \item There exist integral polynomials $a_4(t_1, \dots, t_k)$ and $a_6(t_1, \dots, t_k)$, with $k=2$ if $G = \Z/2\Z$, $k = 3$ if $G = \Z/3\Z$, and $k = 1$ otherwise, such that each element of $\mathcal{E}_G$ is modeled by the curve $y^{2} = x^{3} + a_4(t_1, \dots, t_k)x + a_6(t_1, \dots, t_k)$ for some $t_1, \dots, t_k \in \Z$.
    \item For all but finitely many $E \in \mathcal{E}_G$, there exists a set $\mathcal{P}$ of $r_{\mathrm{min}}(\mathcal{E}_G)$ linearly independent points in $E(\Q)$ whose coordinates are rational functions in $t_1, \dots, t_k$.
    \item For each $E \in \mathcal{E}_G$, if we have $E:\; y^{2} = x^{3} + a_4(t_1, \dots, t_k)x + a_6(t_1, \dots, t_k)$, then there exist positive constants $m(\mathcal{E}_G)$ and $\mu(\mathcal{E}_G)$ such that the value of $c_G(\mathcal{P})$ is greater than the value $c_{\mathrm{min}}(\mathcal{P})$ given in the following table,\footnote{The family of elliptic curves with torsion subgroup $\Z/3\Z$ has $3$ parameters, two of which were fixed ($t_1=2,\, t_2=4,\, t_3=6$) to compute the values listed in Table~\ref{tabel: torsion subgroup}. The subscript indicates the parameter that was varied.} where $T := \log(\max_i|t_i| + m(\mathcal{E}_G)) + \mu(\mathcal{E}_G)$.
\end{enumerate}

\begin{table}[h!]
\begin{adjustbox}{width=0.7\linewidth,center}
\begin{tabular}{c|c|c|c|c}
 Torsion Subgroup & $r_{\mathrm{min}}(\mathcal{E}_G)$ & $c_{\mathrm{min}}(\mathcal{P})$ & $m(\mathcal{E}_G)$ & $\mu(\mathcal{E}_G)$ \\ \hline 
$\Z/2\Z$ & $8$ &   $4.050\times10^{-10}\cdot T^{-4}$ & $17$ & $0.26$ \\
\;$\Z/3\Z_1$ & $6$ & $2.320\times10^{-6}\cdot T^{-3}$ & $2$ & $1.45$ \\
\;$\Z/3\Z_2$ & $6$ & $2.320\times10^{-6}\cdot T^{-3}$ & $2$ & $1.66$ \\
\;$\Z/3\Z_3$ & $6$ & $2.320\times10^{-6}\cdot T^{-3}$ & $2$ & $0.85$ \\
$\Z/4\Z$ & $5$ & $1.693\times10^{-6}\cdot T^{-\frac{5}2}$  &  $2$ & $0.18$ \\
$\Z/5\Z$ & $2$ & $ 6.732\times10^{-1}\cdot T^{-1}$  &  $2$ & $0.56$  \\
$\Z/6\Z$ & $2$ & $7.968 \times 10^{-1} \cdot T^{-1}$  & $2$ & $0.62$ \\
$\Z/7\Z$ & $1$ & $3.315 \times 10^{0} \cdot T^{-\frac{1}2}$  &  $3$ & $0.33$  \\
$\Z/8\Z$ & $2$ & $2.990 \times 10^{-1} \cdot T^{-1}$   & $6$ & $0.25$ \\
$\Z/2\Z \times \Z/2\Z$ & $6$ & $1.808\times 10^{-7} \cdot T^{-3}$ & $1$ & $0.20$ \\
$\Z/2\Z\times \Z/4\Z$ & $4$ & $1.016 \times 10^{-2} \cdot T^{-2}$  & $1$ & $0.86$\\
$\Z/2\Z\times \Z/6\Z$ & $2$ & $9.058 \times 10^{-1} \cdot T^{-1}$ & $3$ & $0.24$
\end{tabular}
\end{adjustbox}
\vspace{0.2cm}
\caption{\label{tabel: torsion subgroup} Lower bounds for $c_G(\mathcal{P})$ where $E$ has a specified torsion subgroup.}
\vspace{-6mm}
\end{table}
\end{theorem}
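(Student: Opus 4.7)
\medskip

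\textbf{Proof proposal.} The plan is to handle the theorem family by family: for each torsion subgroup $G\in\mathcal{S}$, I would pick a known parametric family $\mathcal{E}_G$ from the cited references (Dujella, Dujella--Peral, Elkies, Kihara, Kulesz, etc.), which already come equipped with integral polynomial expressions for $a_4(t_1,\dots,t_k)$ and $a_6(t_1,\dots,t_k)$, together with an explicit list $\mathcal{P}=\{P_1,\dots,P_{r_{\min}}\}$ of points whose coordinates are rational functions in the parameters $t_i$. Item (1) is then immediate from the chosen model, and item (2) reduces to checking that the listed $\mathcal{P}$ is linearly independent for all but finitely many specializations; this is standard via a specialization argument \`a la Silverman --- exhibit one specialization $(t_1,\dots,t_k)$ of small height where the points are independent (by comparing canonical heights numerically, using \texttt{pari/gp} or \texttt{Magma}), and then invoke the specialization theorem to conclude generic independence away from a thin set.

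The substance of the theorem is item (3), the lower bound for $c_G(\mathcal{P}) = |G|\,\Omega_r/\sqrt{R_\Q(\mathcal{P})}$. Since $|G|$ and $\Omega_{r_{\min}}$ are fixed, it suffices to give an upper bound for the regulator $R_\Q(\mathcal{P})=\det(\langle P_i,P_j\rangle)$ that grows like $T^{r_{\min}}$ in the parameter $T=\log(\max_i|t_i|+m(\mathcal{E}_G))+\mu(\mathcal{E}_G)$. I would proceed in three steps:
\begin{enumerate}
\item[(a)] Since the $x$-coordinate of each $P_i$ is an explicit rational function in $t_1,\dots,t_k$, its Weil height $h_W(P_i)$ is a polynomial-in-$\log$ expression; a direct bound on the numerator and denominator degrees yields $h_W(P_i)\le A_i\,\log(\max_i|t_i|+m_i)+B_i$ for explicit $A_i,B_i$ depending only on the family.
\item[(b)] To pass from $h_W(P_i)$ to $\hat h(P_i)$, I would use Silverman's height-difference bound $\big|\hat h(P)-\tfrac12 h_W(P)\big|\le \tfrac{1}{8}h_W(j(E))+\tfrac{1}{12}h_W(\Delta(E))+C$ for an explicit absolute constant $C$ (compatible with the definition of $\delta(E)$ in~\eqref{eq: cE}). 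Since $j(E)$ and $\Delta(E)$ are polynomials in the $t_i$, their Weil heights are again bounded linearly in $\log(\max_i|t_i|)$, so $\hat h(P_i)\le c_i\,T$ for a suitable absorption of the absolute constants into $m(\mathcal{E}_G)$ and $\mu(\mathcal{E}_G)$.
\item[(c)] Finally, by Cauchy--Schwarz/Hadamard applied to the Gram matrix of the $P_i$ in the N\'eron--Tate inner product, one gets $R_\Q(\mathcal{P})\le \prod_i \hat h(P_i)\le \prod_i(c_i T)$, so $\sqrt{R_\Q(\mathcal{P})}\le C'\cdot T^{r_{\min}/2}$ for an explicit $C'$, yielding $c_G(\mathcal{P})\ge c_{\min}(\mathcal{P})\cdot T^{-r_{\min}/2}$.
\end{enumerate}

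To land the particular numerical constants $c_{\min}(\mathcal{P})$, $m(\mathcal{E}_G)$, $\mu(\mathcal{E}_G)$ in Table~\ref{tabel: torsion subgroup}, one has to track the implied constants at each step. The coefficient $c_{\min}(\mathcal{P})=|G|\,\Omega_{r_{\min}}/C'$ falls out once the $c_i$ are computed; the shift $m(\mathcal{E}_G)$ is introduced to ensure positivity of the $\log$ across the full parameter range and to absorb the lower-order polynomial terms in $|t_i|$ that appear inside $h_W$; and $\mu(\mathcal{E}_G)$ absorbs the constants coming from Silverman's inequality, $h_W(\Delta(E))$, and $h_W(j(E))$. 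For the $\Z/3\Z$ family, which is three-parameter, the three rows in the table correspond to fixing two of $t_1,t_2,t_3$ (at $2,4,6$ in the appropriate order) and varying the third.

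The main obstacle is not conceptual but bookkeeping: each family requires its own height computation, because the polynomial expressions for $a_4$, $a_6$, and the coordinates of the $P_i$ differ substantially across the eleven rows, and the numerical values of $c_{\min}(\mathcal{P})$, $m$, $\mu$ have to be derived tightly enough to match the table. In practice I would automate step (a) and the $j,\Delta$ height bounds in a computer algebra system and verify step (b) by sampling small parameter values, then formalize the uniform bound. The independence check in item (2) and the uniform upper bound on $\hat h(P_i)$ in step (b) are the places where care is required; once those are in hand, the Hadamard-plus-Silverman recipe gives the stated lower bounds mechanically.
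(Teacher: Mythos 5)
Your proposal follows essentially the same route as the paper's Appendix: bound each $H(P_i)$ by an explicit $c_i(|t|+m_i)^{n_i}$, convert to canonical heights via Silverman's height-difference theorem, apply Hadamard's inequality to get $R_\Q(\mathcal{P})\le\prod_i\hat h(P_i)\le(\log(|t|+m)+\mu)^{r}$, and absorb the constants into $m(\mathcal{E}_G)$, $\mu(\mathcal{E}_G)$, and $c_{\min}(\mathcal{P})$, with item (2) taken from the cited families. The only cosmetic difference is that you quote Silverman's bound symmetrically with the $\tfrac18 h_W(j(E))$ coefficient on both sides, whereas the upper-bound direction actually has coefficient $\tfrac1{12}$, which the paper uses to get slightly sharper constants; this does not affect correctness.
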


In practice, the discriminant must be quite large for the lower bound for $h(-D)$ given in Theorem \ref{BigTheorem} to improve on \eqref{GrossZagier-bound} for these families, but once discriminants exceed a certain threshold, the improvement is substantial. 
For instance, fix $\varepsilon = 1/1000$, and consider the elliptic curve $E_6$ obtained from the family $\mathcal{E}_{\Z/6\Z}$ described in Theorem \ref{TheoremInfiniteFamilies} by setting $t_1 = 0$. One can verify computationally\footnote{A link to the code used to compute these results is contained in Appendix \ref{Appendix}.} that for suitable $(u,v)$ given approximately by $(2728 \times 2^{2800}, 2^{2800})$, $(2728 \times 2^{3000}, 2^{3000})$, and $(2728 \times 2^{30000}, 2^{30000})$, which yield
discriminants $D_{E_6}(u, v)$ on the order of $e^{7778}$, $e^{8332}$, and $e^{83192}$, respectively, we obtain lower bounds for $h(-D)$ of $5$, $16$, and $2324$; \eqref{GrossZagier-bound} meanwhile gives lower bounds of approximately 2, 2, and 12. If one is willing to consider much larger discriminants, then the families of higher rank from Theorem \ref{TheoremInfiniteFamilies} yield better bounds. For example, for the member $E_2$ of the family $\mathcal{E}_{\Z/2\Z}$ determined by $t_1 = 0$ and $t_2 = 1$, fixing $\varepsilon = 1/1000$ and choosing $(u, v) \approx (290291 \times 2^{319618}, 2^{319618})$ suitable, we find that $D_E(u, v) \approx e^{886195}$, and that our lower bound for the class number is approximately $1.3 \times 10^7$, whereas \eqref{GrossZagier-bound} gives a lower bound of approximately 127. 

% (2728*2^(2800), 2^(2800)), (2728*2^(3000), 2^(3000)), (2728*2^(30000), 2^(30000))

This paper is organized as follows. In Section \ref{sec2}, we define the family of maps $$\Phi_{u,v}: E(\Q) \rightarrow \CL(-D_E(u,v))$$ and prove Theorem~\ref{Theorem : Momomorphism}. Using this result, in Section~\ref{sec3} we give an explicit lower bound for $h(-D_E(u,v)).$ Finally, in Section~\ref{sec4}, we use a theorem of Gouv\^ea and Mazur \cite{GouveaMazur} to count the number of discriminants in our family $-D_E(u,v)$ that fulfill the criteria given in Theorem~\ref{BigTheorem}. A discussion of Theorem~\ref{TheoremInfiniteFamilies} is given in Appendix~\ref{Appendix}.
\vspace{0.5cm}

\section*{Acknowledgements}
The authors would like to thank Professor Ken Ono for advising this project and for many helpful conversations and suggestions. We thank Professor N.\@ Elkies and W.-L.\@ Tsai for their valuable comments, as well as W.\@ Craig and B.\@ Pandey for their support, and the anonymous referees for their helpful comments. We give special thanks to Shengtong Zhang for his assistance in proving several key divisibility conditions. Finally, we thank the NSF (DMS-2002265), the NSA (H98230-20-1-0012), the Templeton World Charity Foundation, and the Thomas Jefferson Fund at the University of Virginia.
\vspace{0.5cm}

\section{A Homomorphism from Elliptic Curves to Class Groups}\label{sec2}

Consider an elliptic curve $E: y^2= x^3+a_4x+a_6$. Motivated by the ideal class pairings $E(\Q)\times E_{-D}(\Q) \rightarrow \CL(-D)$ studied by Buell, Call, Soleng, Griffin, and Ono \cite{Buell, BuellCall, G-O, Soleng},  we define a map $\Phi_{u, v}: E(\Q) \rightarrow \CL(-D_E(u, v))$ for the infinite family of negative fundamental discriminants $-D_E(u, v)$ with \textit{map-suitable} $(u,v)$:

\begin{definition} \label{Def:map-suitable}
We say that a pair $(u, v) \in \Z^2$ is \textbf{map-suitable for $\textbf{E}$} if the values $u$, $v$, $3u^2 + a_4v^2$, and $D_E(u, v)$ are positive.
\end{definition}

For notational convenience, we write $d_E(u,v):= \frac{D_E(u,v)}{4}$. To define our maps $\Phi_{u,v}$, we will need the following lemma.

\newpage
\begin{lemma} \label{Lemma:Phi}
Suppose $(u, v) \in \Z^2$ is map-suitable for $E$ and $-D_E(u,v)$ is a negative fundamental discriminant. Then
\begin{enumerate}
    \item $v$ is square-free and $\gcd(u,v) = 1.$
    \item If $P \in E(\Q)$ is not the point at infinity $\mathcal{O}$, there exist $A, B, C \in \Z$ with $\gcd(A, C) = \gcd(B, C) = 1$, $C > 0,$ such that $P = (\frac{A}{C^2}, \frac{B}{C^3}).$ Write $a = Av + C^2u$ and $g = \gcd(C, v).$ Then there exists an integer $\mu$ such that $C^3/g^2 \cdot \mu \equiv 1 \pmod {va/g^2}.$
    \item For any such $\mu,$ the following formula 
        \begin{equation} \label{PhiDefinition}
        \Phi_{u, v}(P, \mu) :=
            \frac{va}{g^2} \cdot x^2 + 2\mu\,\frac{Bv^2}{g^2} \cdot xy + \frac{\mu^2\,\frac{B^2v^4}{g^4}+d_E(u,v)}{\frac{va}{g^2}} \cdot y^2
        \end{equation}
    defines a positive definite binary quadratic form with discriminant $-D_E(u,v).$
\end{enumerate}
\end{lemma}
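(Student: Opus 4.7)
For \textbf{Part~(1)}, I would use that $-D_E(u,v)=-4d_E(u,v)$ is a fundamental discriminant by hypothesis, which forces $d_E(u,v)$ to be a squarefree positive integer. Since $v\mid d_E(u,v)$, this immediately makes $v$ squarefree. If some prime $p$ divided both $u$ and $v$, it would also divide $u^3+a_4uv^2-a_6v^3$, so $p^2$ would divide $d_E(u,v)$, contradicting squarefreeness; hence $\gcd(u,v)=1$.

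For \textbf{Part~(2)}, my plan is to establish directly that $\gcd(C^3/g^2,\,va/g^2)=1$, from which the existence of the modular inverse $\mu$ is immediate. Write $C=gC'$ and $v=gv'$, so that by definition of $g$ we have $\gcd(C',v')=1$, and by Part~(1) the squarefreeness of $v$ gives $\gcd(g,v')=1$. Since $g\mid v$ and $g\mid C^2$, we have $g\mid a$, and a short calculation yields $C^3/g^2=gC'^3$ and $va/g^2=v'(Av'+gC'^2u)$. The desired coprimality then factors into four pairwise checks between $\{g,C'^3\}$ and $\{v',\,Av'+gC'^2u\}$, each of which reduces to one of the gcd facts $\gcd(C',v')=1$, $\gcd(g,v')=1$, or $\gcd(A,C)=1$ (the last implying both $\gcd(A,g)=1$ and $\gcd(A,C')=1$).

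For \textbf{Part~(3)}, the identity $\beta^2-4\alpha\gamma=-D_E(u,v)$ is immediate from the formula for $\gamma$, so the main obstacle is showing $\gamma\in\Z$, i.e.\ that $va/g^2$ divides $\mu^2 B^2v^4/g^4 + d_E(u,v)$. I would multiply through by $C^6/g^4$, which is coprime to $va/g^2$ by Part~(2), and use $\mu^2 C^6/g^4\equiv 1\pmod{va/g^2}$ together with the Weierstrass relation $B^2=A^3+a_4AC^4+a_6C^6$ to reduce the claim to showing that $va/g^2$ divides $(B^2v^4+C^6d_E(u,v))/g^4$. Expansion produces a convenient cancellation of the $a_6$ terms, leaving
\[
B^2v^4+C^6d_E(u,v) \;=\; A^3v^4+C^6vu^3+a_4v^3C^4a.
\]
The key factorization $A^3v^3+C^6u^3=(Av+C^2u)(A^2v^2-AC^2uv+C^4u^2)=a\,(A^2v^2-AC^2uv+C^4u^2)$ pulls a factor of $va$ out of the whole numerator, and after dividing by $g^4$ and $va/g^2$ it remains only to check that $g^2$ divides $A^2v^2-AC^2uv+C^4u^2+a_4v^2C^4$, which is immediate from $g\mid v$ and $g\mid C$. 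Finally, positive definiteness follows from the negative discriminant together with $\alpha=va/g^2>0$; since $v>0$ by map-suitability, this last condition amounts to $a=Av+C^2u>0$, which I expect to be arranged either by the setting in which $\Phi_{u,v}$ is applied (e.g.\ by the class of points to which it is actually evaluated) or by a sign convention on the representative $(A,B,C)$.
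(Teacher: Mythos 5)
Your parts (1) and (2), and the integrality argument in part (3), are correct and essentially match the paper's proof: the paper also establishes $\gcd(C^3/g^2,\,va/g^2)=1$ (via a prime-divisor contradiction rather than your explicit factorization $C^3/g^2=gC'^3$, $va/g^2=v'(Av'+gC'^2u)$, but the content is the same), and it uses exactly your identity $B^2v^4+C^6d_E(u,v)=va\,(A^2v^2-AC^2uv+C^4u^2+a_4C^4v^2)$; your explicit check that $g^2$ divides the cofactor is a detail the paper leaves implicit.

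The genuine gap is the positivity of $a=Av+C^2u$, which you defer to ``the setting in which $\Phi_{u,v}$ is applied'' or ``a sign convention.'' Neither can supply it: the lemma asserts positive definiteness for \emph{every} $P\in E(\Q)$, with no restriction on which points the map is evaluated at, and once $C>0$ is fixed the sign of $A$ is determined by $P$, so there is no convention left to choose. This is precisely where the map-suitability hypotheses enter, and your proposal never uses the condition $3u^2+a_4v^2>0$ at all. The paper's argument: since $u,v>0$, $3u^2+a_4v^2>0$, and $D_E(u,v)>0$, every coefficient of the polynomial $\bigl(-\tfrac uv - x\bigr)^3+a_4\bigl(-\tfrac uv - x\bigr)+a_6$ in $x$ is negative (the constant term because $D_E(u,v)>0$, the linear term because $3u^2+a_4v^2>0$), so $x^3+a_4x+a_6<0$ for all $x\le -\tfrac uv$. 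Since $P$ lies on $E$, we have $\bigl(\tfrac{A}{C^2}\bigr)^3+a_4\tfrac{A}{C^2}+a_6=\bigl(\tfrac{B}{C^3}\bigr)^2\ge 0$, which forces $\tfrac{A}{C^2}>-\tfrac uv$, i.e.\ $a=C^2v\bigl(\tfrac{A}{C^2}+\tfrac uv\bigr)>0$. Without an argument of this kind the leading coefficient $va/g^2$ could not be shown positive, and the claimed positive definiteness in part (3) is not established.
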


%\newpage
\begin{proof}
(1) follows from the assumption that $-D_E(u,v)$ is a fundamental discriminant. To prove (2), we suppose $p$ is a prime such that $p \divides \frac{C^3}{g^2}$ and $p \divides \frac{va}{g^2} = \frac{v(Av+C^2u)}{g^2}$, i.e., $g^2p \divides C^3$ and $g^2p \divides v(Av+C^2u)$. In particular, we see that $p \divides C$ and $p \divides v(Av + C^2 u)$, so $v(Av+C^2u) \equiv Av^2 \equiv 0 \pmod{p}$. Since $A$ and $C$ are coprime, we obtain $p \divides \gcd(C, v) = g$. Consequently, $p^3 \divides v(Av+C^2u)$, which implies that $p^2 \divides Av+C^2u$ because $v$ is square-free. This yields $Av \equiv 0 \pmod{p^2}$ and hence $p^2 \divides v$, which is impossible. Thus, $\gcd(\frac{C^3}{g^2}, \frac{va}{g^2}) = 1$. 

We now prove (3). A straightforward computation shows that the form in \eqref{PhiDefinition} has discriminant \(-D_E(u, v)\). To see that $\Phi_{u,v}(P, \mu)$ is positive definite for any $P \in E(\Q)$, we observe that map-suitability ensures that the coefficient of each power of $x$ in the polynomial expression $\big(-\frac{u}{v} - x\big)^3 + a_4\big(-\frac{u}{v} - x\big) + a_6$ is negative. Thus, $x^3 + a_4x + a_6 < 0$ for all $x \leq -\frac{u}{v}$, implying that $\frac{A}{C^2} > -\frac{u}v$, or equivalently, $a = C^2v(\frac{A}{C^2}+\frac{u}{v}) > 0$.

Finally, we show that the form in \eqref{PhiDefinition} has integral coefficients. The first two coefficients are clearly integers. For the third, note that we have $B^2v^4 = v(A^3v^3 + a_4AC^4v^3 + a_6C^6v^3)$ because $P \in E(\Q)$, and $C^6d_E(u,v) = v(C^6u^3+a_4C^6uv^2-a_6C^6v^3)$ by the definition of $d_E(u,v)$. This allows us to write \begin{equation*}
    B^2v^4 + C^6d_E(u,v) = v(A^3v^3 + C^6u^3 + a_4C^4v^2(Av + C^2u)) = va(A^2v^2 - AC^2uv + C^4u^2 + a_4C^4v^2),
\end{equation*} which in particular yields that $va \divides B^2v^4 + C^6d_E(u,v)$. From this, we see that 
\begin{equation*}
\mu^2\bigg(\frac{Bv^2}{g^2}\bigg)^2 + d_E(u,v) \equiv \bigg(\frac{C^3}{g^2}\bigg)^{-2}\bigg(\bigg(\frac{Bv^2}{g^2}\bigg)^2 + \frac{C^6d_E(u,v)}{g^4}\bigg) \equiv 0 \pmod[\Big]{\frac{va}{g^2}}.
\end{equation*}
\end{proof}

Using this lemma, we can define our map $\Phi_{u,v}:E(\Q) \rightarrow \CL(-D_E(u,v))$ for appropriate $(u,v)$.

\begin{theorem} \label{TheoremPhi}
Let the notation and hypotheses be as in Lemma~\ref{Lemma:Phi}. Then
\begin{enumerate}
    \item The class of $\Phi_{u,v}(P, \mu)$ in $\CL(-D_E(u,v))$ depends only on $P$ and not on the choice of $\mu,$ hence defines a map $\Phi_{u,v}: E(\Q) \rightarrow \CL(-D_E(u,v)),$ where by convention $\Phi_{u,v}(\mathcal{O})$ is set to be the identity of $\CL(-D_E(u,v)).$
    
    %The map $\Phi_{u,v}: E(\Q) \rightarrow \CL(-D_E(u,v))$ sending $P \neq \mathcal{O}$ to $\Phi_{u,v}(A, B, C, \mu)$ (for any such $\mu$) and $\mathcal{O}$ to the identity form is well-defined. 
    \item  If $\Phi_{u, v}(P)$ is the identity of $\CL(-D_E(u, v))$, then we have either $v \mid C$ or $\frac{va}{g^2} \geq d_E(u,v)$.
\end{enumerate}
\end{theorem}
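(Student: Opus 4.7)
The plan for part (1) is to show that changing the inverse $\mu$ yields a properly equivalent form. Any two integers $\mu,\mu'$ satisfying $\tfrac{C^3}{g^2}\mu \equiv \tfrac{C^3}{g^2}\mu' \equiv 1 \pmod{\tfrac{va}{g^2}}$ differ by a multiple of $\tfrac{va}{g^2}$; write $\mu' = \mu + k\cdot\tfrac{va}{g^2}$ for some $k \in \Z$. I would then exhibit the unimodular substitution
\[
(x,y) \;\longmapsto\; \Bigl(x + k\,\tfrac{Bv^2}{g^2}\,y,\; y\Bigr) \;\in\; \SL_2(\Z)
\]
and directly verify that it carries $\Phi_{u,v}(P,\mu)$ to $\Phi_{u,v}(P,\mu')$. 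The leading coefficient $\tfrac{va}{g^2}$ is manifestly preserved; the middle coefficient picks up exactly $2\cdot\tfrac{va}{g^2}\cdot k\,\tfrac{Bv^2}{g^2} = 2(\mu'-\mu)\,\tfrac{Bv^2}{g^2}$, matching the shift in the formula \eqref{PhiDefinition}; and expanding the constant coefficient gives the required agreement with the formula for $\mu'$. Hence any two admissible $\mu$'s give properly equivalent forms, so the class of $\Phi_{u,v}(P,\mu)$ in $\CL(-D_E(u,v))$ depends only on $P$.

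For part (2), I would invoke the standard fact that two properly equivalent positive definite binary quadratic forms represent exactly the same set of integers. Since $-D_E(u,v) = -4\,d_E(u,v)$, the principal form of discriminant $-D_E(u,v)$ is $x^2 + d_E(u,v)\,y^2$. The form $\Phi_{u,v}(P,\mu)$ represents its leading coefficient $\tfrac{va}{g^2}$ at $(x,y)=(1,0)$, so if $\Phi_{u,v}(P)$ is the identity class, there exist $x_0,y_0 \in \Z$ with $\tfrac{va}{g^2} = x_0^2 + d_E(u,v)\,y_0^2$. If $y_0 \neq 0$, then $\tfrac{va}{g^2} \geq d_E(u,v)$, which is the second alternative.

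If instead $y_0 = 0$, then $\tfrac{va}{g^2}$ is a perfect square, and I would argue this forces $v \mid C$. Writing $C = gC'$ and $v = gv'$ with $\gcd(C',v') = 1$, a direct rearrangement using $a = Av + C^2u$ yields
\[
\tfrac{va}{g^2} = v'\bigl(Av' + gC'^2u\bigr).
\]
The crux is verifying these two factors are coprime: a prime $p \mid v'$ cannot divide $g$ (since $v = gv'$ is squarefree), cannot divide $C'$ (by construction), and cannot divide $u$ (since $\gcd(u,v) = 1$), so $p \nmid gC'^2u$ and hence $p \nmid Av' + gC'^2u$. A product of two coprime positive integers can be a square only if each factor is; because $v'$ divides the squarefree integer $v$, being a square forces $v' = 1$, whence $v = g = \gcd(C,v) \mid C$.

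The main obstacle will be carefully marshalling the chain of coprimality facts in the $y_0 = 0$ branch, though every link uses a hypothesis already furnished by Lemma~\ref{Lemma:Phi} (namely $v$ squarefree, $\gcd(u,v)=1$, and $\gcd(A,C)=1$). Part (1) is essentially a bookkeeping calculation once the correct $\SL_2(\Z)$-substitution is identified, and the representation argument underlying part (2) is classical.
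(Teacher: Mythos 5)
Your proposal is correct. Part (1) is essentially the paper's argument: both reduce to $\mu \equiv \mu' \pmod{\frac{va}{g^2}}$ (using the coprimality of $\frac{C^3}{g^2}$ and $\frac{va}{g^2}$ from Lemma~\ref{Lemma:Phi}) and apply the same shear, since your shift $k\,\frac{Bv^2}{g^2}$ equals the paper's $\frac{(\mu_2-\mu_1)Bv}{a}$. In part (2) the core step is also the same — the leading coefficient $\frac{va}{g^2}$ must be of the form $\alpha^2 + d_E(u,v)\gamma^2$ — but you finish the $\gamma = 0$ branch differently: the paper transforms the principal form by an explicit $\SL_2(\Z)$ matrix, so $\gcd(\alpha,\gamma)=1$ forces $\frac{va}{g^2}=1$, and then asserts that $v \mid C$ "easily follows"; you instead allow $\frac{va}{g^2}$ to be an arbitrary perfect square and rule it out via the coprime factorization $\frac{va}{g^2} = v'\bigl(Av' + gC'^2u\bigr)$ with $v'$ squarefree, concluding $v' = 1$ and $v = g \mid C$. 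Your route trades the matrix-entry coprimality for a short coprimality chain ($v$ squarefree, $\gcd(u,v)=1$, $\gcd(C',v')=1$), which is slightly more general and has the virtue of supplying in full the detail the paper leaves implicit; both arguments are sound.
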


\begin{remark}
Although the statement of Theorem~\ref{TheoremPhi} does not assign special significance to the parameters $u$ and $v$, in analogy to ideal class pairings, one may view them as specifying a point $Q = (-\frac{u}{v},\frac{1}{v^2})$ on the quadratic twist $E_{-D_E(u,v)}(\Q)$.
\end{remark}

\begin{proof}
(1) To see that \eqref{PhiDefinition} represents a single equivalence class under the action of $\SL_2(\Z)$, simply note that if $\mu_1$ and $\mu_2$ satisfy $\frac{C^3}{g^2}\mu_1 \equiv \frac{C^3}{g^2}\mu_2 \equiv 1 \pmod{\frac{va}{g^2}}$, then $\mu_1 \equiv \mu_2 \pmod{\frac{va}{g^2}}$, so the corresponding forms are properly equivalent under the $\SL_2(\Z)$ transformation $x \mapsto x + \frac{(\mu_2-\mu_1)Bv}{a}y, y \mapsto y$.

(2) If $P = \big(\frac{A}{C^2}, \frac{B}{C^3}\big)$ maps to the identity, then by comparing the first coefficient of the corresponding form to that of the identity form acted upon by a general element of \(\SL_2(\Z)\), one can verify that we must have $\frac{va}{g^2} = \alpha^2 + d_E(u,v)\gamma^2$ for some coprime $\alpha, \gamma \in \Z$. If $\gamma = 0$, then $\alpha^2 = 1$ and so $\frac{va}{g^2} = 1$, from which it easily follows that $v \divides C$. Otherwise, we easily obtain $\frac{va}{g^2} \geq d_E(u,v)\gamma^2 \geq d_E(u,v)$ as required.
\end{proof}

To prove Theorem~\ref{Theorem : Momomorphism}, we use Bhargava's formulation of the various composition laws for binary quadratic forms \cite[Section~2]{Bhargava}. For the remainder of this section, fix an elliptic curve $E: x^3+a_4x+a_6$ and a pair $(u,v) \in \Z^2$ that is map-suitable for $E$ such that $-D_E(u,v)$ is a negative fundamental discriminant.
Consider three finite points $P_i = \big(\frac{A_i}{C_i^2}, \frac{B_i}{C_i^3}\big) \in E(\Q)$, with $\gcd(A_i,C_i) = \gcd(B_i,C_i)$ = 1 and $C_i > 0$, that satisfy $P_1 + P_2 + P_3 = \mathcal{O}$. Let $l\,y = m\,x + n$ with $l, m, n \in \Z$ be the line whose intersections with $E$ are precisely the points $P_i$, counting multiplicities. For convenience, we will let $i$, $j$, and $k$ denote three distinct indices. Set $d := d_E(u, v)$, $C := C_1\,C_2\,C_3$, $M := C\cdot\frac{m}{l}$, $N := C\cdot\frac{n}{l}$, $b := Nv-Mu$, $a_i := A_i\,v + C_i^2\,u$, and $g_i := \gcd(C_i, v)$. Moreover, choose $\mu_i, \ell_i \in \Z$ such that $C_i^3\,\mu_i + v\,a_i\,\ell_i = g_i^2$, and set $q_i := \frac{b\,v\,\ell_i-M\,C_i\,\mu_i}{g_i}$. Lastly, define the values \begin{align*}
    \rho := v \cdot \frac{C}{g_1\,g_2\,g_3}, \quad \psi_i := v \cdot \frac{q_i}{g_j\,g_k}, \quad \phi_i := \frac{v\,q_j\,q_k - a_i\,g_j\,g_k}{C\,g_i}, 
    \end{align*}
    and
    \begin{align*}
    \theta := \frac{1}{C^2}\left(v\,q_1\,q_2\,q_3-a_1\,g_2\,g_3\,q_1-a_2\,g_1\,g_3\,q_2-a_3\,g_1\,g_2\,q_3+2\,b\,g_1\,g_2\,g_3\right).
\end{align*}
Then we have the following lemma.\footnote{The proof that $\phi_i, \theta$ are integral was communicated to us by Zhang.}

\begin{lemma} \label{TheoremBhargava} The Bhargava cube \begin{equation*}
\begin{tikzpicture}
\draw[thick] (0.35,0,0) -- (1.1,0,0);
\draw[thick] (1.4,0,0) -- (1.65,0,0);

\draw[thick] (2,0.35,0) -- (2,1.65,0);
\draw[thick] (1.65,2,0) -- (0.35,2,0);

\draw[thick] (0,1.65,0) -- (0,1.4,0);
\draw[thick] (0,1.05,0) -- (0,0.35,0);

%\draw (0,0,0,0) \node{a};

\draw[thick] (0.35,0,2) -- (1.65,0,2);
\draw[thick] (2,0.35,2) -- (2,1.65,2);
\draw[thick] (1.65,2,2) -- (0.35,2,2);
\draw[thick] (0,1.65,2) -- (0,0.35,2);

\draw[thick] (0,0,.5) -- (0,0,1.5);
\draw[thick] (2,0,0.5) -- (2,0,1.5);
\draw[thick] (2,2,0.5) -- (2,2,1.5);
\draw[thick] (0,2,0.5) -- (0,2,1.5);

\draw(0,0,0) node{$\phi_3$};
\draw(0,2,0) node{$\psi_1$};
\draw(2,2,0) node{$\phi_2$};
\draw(2,0,0) node{$\theta$};

\draw(0,0,2) node{$\psi_2$};
\draw(0,2,2) node{$\rho$};
\draw(2,2,2) node{$\psi_3$};
\draw(2,0,2) node{$\phi_1$};
\end{tikzpicture}
\end{equation*} is integral of discriminant $-D_E(u, v)$ with associated primitive quadratic forms \begin{equation*}
    Q_i(x, y) := \frac{v\,a_i}{g_i^2} \cdot x^2 + 2\mu_i\,\frac{B_i\,v^2}{g_i^2} \cdot xy + \frac{\mu_i^2\,\frac{B_i^2\,v^4}{g_i^4}+d}{\frac{v\,a_i}{g_i^2}} \cdot y^2.
\end{equation*}
\end{lemma}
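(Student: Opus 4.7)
The plan has four ingredients: integrality of the eight cube entries, matching the three slice-forms of the cube with $Q_1, Q_2, Q_3$, verifying the discriminant, and verifying primitivity. I would treat integrality first, dispatching the routine entries before the delicate ones. Since each $g_i$ divides $C_i$, we have $g_1 g_2 g_3 \mid C$, giving $\rho \in \Z$. Since $g_i \mid v$ and $g_i \mid C_i$, we get $g_i \mid bv\ell_i - MC_i\mu_i$, so $q_i \in \Z$; the additional factor of $v$ combined with the Bezout relations $C_i^3\mu_i + va_i\ell_i = g_i^2$ then yields $\psi_i = vq_i/(g_j g_k) \in \Z$. For the genuinely hard entries $\phi_i$ and $\theta$, one must follow the approach communicated by Zhang: the collinearity of $P_1, P_2, P_3$ on $E$ produces, via Vieta applied to the cubic $l^2 x^3 - m^2 x^2 + (l^2 a_4 - 2mn)x + (l^2 a_6 - n^2) = 0$, algebraic identities in $A_i, B_i, C_i, M, N$ which, combined with the Bezout relations above, force the required divisibilities. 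This is the main technical obstacle, because no pure gcd argument cancels the denominators $C$ and $g_i$ appearing in $\phi_i$ and $\theta$ — the elliptic curve equation itself must be invoked.

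Having secured integrality, I would compute the three binary quadratic forms obtained by slicing the cube perpendicular to each of its axes, using the standard formula $Q_i(x, y) = -\det(M_i x + N_i y)$ for the pair of $2 \times 2$ matrices $(M_i, N_i)$ produced by the $i$-th slicing. Expanding each determinant and simplifying using the definitions of $\rho, \psi_i, \phi_i, \theta$ — together with the identity
\[
B_i^2 v^4 + C_i^6 d = va_i\bigl(A_i^2 v^2 - A_i C_i^2 uv + C_i^4 u^2 + a_4 C_i^4 v^2\bigr)
\]
established in the proof of Lemma~\ref{Lemma:Phi} — one verifies that the three slice-forms are precisely $Q_1, Q_2, Q_3$. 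The discriminant of the cube is by definition the common discriminant of its slice-forms, which is $-D_E(u,v)$ by Lemma~\ref{Lemma:Phi}.

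Finally, primitivity of each $Q_i$ is automatic from the fundamentality hypothesis: if the gcd $k$ of the coefficients of $Q_i$ exceeded $1$, then $Q_i/k$ would be an integral form with integer discriminant $-D_E(u,v)/k^2$, which would require $k^2 \mid D_E(u,v)$; this contradicts $-D_E(u,v)$ being a fundamental discriminant. Thus each $Q_i$ is primitive, completing the proof. The overall structure is therefore: a mostly computational verification of slicings and discriminant, bracketed by a straightforward primitivity observation and a single hard arithmetic lemma — the integrality of $\phi_i$ and $\theta$ — which is where all the elliptic-curve content is concentrated.
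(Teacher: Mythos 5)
Your plan follows the paper's route: the slice forms and the discriminant are verified by direct computation (using the identity $B_i^2v^4 + C_i^6 d = va_i(\cdots)$ from Lemma~\ref{Lemma:Phi}), primitivity is deduced from fundamentality of $-D_E(u,v)$, and the substantive work is the integrality of the cube entries, obtained from the Vieta identities furnished by collinearity of $P_1,P_2,P_3$ together with the relations $C_i^3\mu_i + va_i\ell_i = g_i^2$; this is precisely the paper's Lemma~\ref{lemma:observations} followed by expanding $(Cg_ig_jg_k)\phi_i$ and $(C^2g_1g_2g_3)\theta$ into $\ell,\mu$-monomials and checking each coefficient.

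There is, however, a genuine gap in where you draw the line between ``routine'' and ``hard''. Your argument for $\psi_i$ --- that $q_i\in\Z$ from $g_i\mid v$ and $g_i\mid C_i$, and that the extra factor of $v$ together with the Bezout relations gives $g_jg_k\mid vq_i$ --- does not go through. Since $v$ is squarefree, the delicate case is a prime $p$ dividing both $g_j$ and $g_k$ (so $p\mid v,C_j,C_k$), where one must show $p\mid q_i$; if $p\nmid C_i$, reducing $C_i^3\mu_i + va_i\ell_i = g_i^2$ mod $p$ gives $\mu_i\not\equiv 0\pmod{p}$, so the needed divisibility is equivalent to $p\mid M$, which is not a gcd fact but a consequence of collinearity, namely identity \eqref{obs6}: $M^2 = A_1C_2^2C_3^2 + A_2C_1^2C_3^2 + A_3C_1^2C_2^2$. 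The paper handles this by writing $\psi_i = \frac{Nv^3\ell_i}{g_ig_jg_k} - Mv\cdot\frac{uv\ell_i+C_i\mu_i}{g_ig_jg_k}$ and showing the square of the second term is integral via \eqref{obs6}. Note also that the integrality of $M$ and $N$ (hence of $b$ and of $q_i$), which you use tacitly, itself comes from \eqref{obs4} and \eqref{obs6}. So the elliptic-curve input enters already at $\psi_i$, not only at $\phi_i$ and $\theta$; with that repair your plan coincides with the paper's proof. A final small point: in the primitivity step, $k^2\mid D_E(u,v)$ alone is not a contradiction when $k=2$, since $D_E(u,v)=4d_E(u,v)$ is divisible by $4$; you also need that $-d_E(u,v)\not\equiv 0,1\pmod 4$, so that $-D_E(u,v)/4$ cannot be the discriminant of an integral form.
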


By direct computation, one verifies that the given cube has the desired discriminant and associated forms. The associated forms are primitive because the discriminant $-D_E(u,v)$ is assumed to be fundamental. It remains only to show that the cube is integral. To do so, we will use the following identities.

\begin{lemma} \label{lemma:observations}
We have the identities
\begin{align}
    a_1\,a_2\,a_3\,v &= b^2\,v^2 + C^2\,d \label{obs1}, \\
    A_1\,A_2\,A_3 &=N^2 - a_6\,C^2\label{obs4}, \\
    a_2\,a_3\,C_1^2 + a_1\,a_3\,C_2^2 + a_1\,a_2\,C_3^2 &= -2Mbv + C^2\,(3\,u^2+a_4\,v^2)\label{obs2},\\
    A_2\,A_3\,C_1^2 + A_1\,A_3\,C_2^2 + A_1\,A_2\,C_3^2 &= -2MN + a_4\,C^2\label{obs5},\\
    a_1\,C_2^2\,C_3^2 + a_2\,C_1^2\,C_3^2 + a_3\,C_1^2\,C_2^2 &= M^2\,v + 3C^2\,u\label{obs3}, \\
    A_1\,C_2^2\,C_3^2 + A_2\,C_1^2\,C_3^2 + A_3\,C_1^2\,C_2^2 &= M^2\label{obs6}.
\end{align}
Furthermore, we have the divisibility conditions
\begin{align}
    C_i\, C_j &\divides MN + A_i\,A_j\,C_k^2, \label{divis1}\\
    C_i \,C_j &\divides A_i\,A_j\,M + A_i\,C_j^2\,N + A_j\,C_i^2\,N \label{divis2}.
\end{align}
\end{lemma}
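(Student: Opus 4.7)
The identities (\ref{obs1})--(\ref{obs6}) all reduce to Vieta's formulas for the cubic obtained by substituting the line into the curve. Eliminating $y$ from $ly = mx + n$ and $y^2 = x^3 + a_4 x + a_6$ yields
\[
l^2 x^3 - m^2 x^2 + (a_4 l^2 - 2mn)\, x + (a_6 l^2 - n^2) = 0,
\]
whose roots are $A_1/C_1^2$, $A_2/C_2^2$, $A_3/C_3^2$. Matching coefficients with the expansion of $\prod_i (C_i^2 x - A_i)$ after scaling the cubic by $C^2/l^2$, and using $M = Cm/l$ and $N = Cn/l$, yields (\ref{obs4}), (\ref{obs5}), and (\ref{obs6}) at once. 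The identities (\ref{obs1})--(\ref{obs3}) then follow by substituting $a_i = A_i v + C_i^2 u$ into their left-hand sides, expanding, and invoking (\ref{obs4})--(\ref{obs6}); the definitions $b = Nv - Mu$ and $d_E(u,v) = v(u^3 + a_4 u v^2 - a_6 v^3)$ make the cross terms reassemble correctly, most notably in (\ref{obs1}) where $(Nv - Mu)^2 v^2$ collects the $N^2 v^4$, $-2MNuv^3$, and $M^2u^2v^2$ contributions.

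For the divisibility conditions (\ref{divis1}) and (\ref{divis2}), the starting point is collinearity: the equation $lB_i = C_i(mA_i + nC_i^2)$, together with $\gcd(B_i, C_i) = 1$, forces $C_i \mid l$ for each $i$, so that $M, N \in \Z$ and, after rescaling,
\[
(\mathrm{E}_i) \qquad MA_i + NC_i^2 = C_jC_kB_i.
\]
Multiplying $(\mathrm{E}_i)$ by $(\mathrm{E}_j)$ gives $M^2A_iA_j + MN(A_iC_j^2 + A_jC_i^2) + N^2C_i^2C_j^2 = C_iC_jC_k^2B_iB_j$, and hence $C_iC_j \mid M \cdot Y_{ij}$, where $Y_{ij} := A_iA_jM + (A_iC_j^2 + A_jC_i^2)N$ is the expression in (\ref{divis2}); reducing $Y_{ij}$ modulo $C_i$ and factoring as $A_i(MA_j + NC_j^2) = A_iC_iC_kB_j$ shows $C_i \mid Y_{ij}$, and symmetrically $C_j \mid Y_{ij}$. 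For (\ref{divis1}), combining $(\mathrm{E}_k)$ with (\ref{obs4}) yields
\[
A_k(MN + A_iA_jC_k^2) \equiv -N^2C_k^2 + A_iA_jA_kC_k^2 = -a_6\, C^2 C_k^2 \equiv 0 \pmod{C_iC_j},
\]
so the left-hand side is always divisible by $C_iC_j$.

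The main obstacle is upgrading these ``divisibility after multiplication by $M$ or $A_k$'' statements to the clean divisibilities claimed in (\ref{divis1}) and (\ref{divis2}) at primes $p$ where $v_p(M)$ or $v_p(A_k)$ is positive. The formal group law of $E$ at $p$ is the decisive tool: if $p \mid C_i$ and $p \mid C_j$, then $P_i$ and $P_j$ both reduce to $\mathcal{O} \in E(\F_p)$, and $P_1 + P_2 + P_3 = \mathcal{O}$ forces $P_k$ to reduce to $\mathcal{O}$ as well, yielding $p \mid C_k$ and hence $v_p(A_k) = 0$ by coprimality. At primes where $p \mid C_i$ but $p \nmid C_jC_k$, the reductions $\overline{P_j}$ and $\overline{P_k}$ are negatives of each other, giving $A_jC_k^2 \equiv A_kC_j^2 \pmod{p^{v_p(C_i)}}$; combining this with the Vieta congruences $M^2 \equiv C_k^2(A_iC_j^2 + A_jC_i^2) \pmod{C_iC_j}$ and $N^2 \equiv A_iA_jA_k \pmod{C_i}$ obtained from (\ref{obs6}) and (\ref{obs4}), together with the identity $M(MN + A_iA_jC_k^2) \equiv C_k^2 Y_{ij} \pmod{C_iC_j}$, pins down the relevant $p$-adic valuations sufficiently to close the argument by a case analysis on $v_p(A_j)$. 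This is the delicate step underpinning the integrality of $\phi_i$ and $\theta$ in the Bhargava cube that the footnote attributes to Shengtong Zhang.
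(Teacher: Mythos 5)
Your handling of the six identities is essentially correct and close to the paper's own route: both arguments come down to comparing coefficients of the cubic cut out by the line (the paper additionally rewrites that cubic in the shifted variable $x-\frac{u}{v}$ so that \eqref{obs1}--\eqref{obs3} can be read off directly, while you substitute $a_i = A_iv + C_i^2u$ and reduce them to \eqref{obs4}--\eqref{obs6}; either works). One small repair: ``$C_i \mid l$ for each $i$, so $M,N\in\Z$'' is not a valid deduction when the $C_i$ share prime factors ($\operatorname{lcm}(C_1,C_2,C_3)\mid l$ does not give $l \mid Cm$, $l\mid Cn$); integrality of $M$ and $N$ is best read off from \eqref{obs6} and \eqref{obs4} themselves, since a rational number whose square is an integer is an integer. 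Your relation $MA_i + NC_i^2 = C_jC_kB_i$ is fine in any case.

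The divisibility part, however, has a genuine gap. What you actually establish is $C_iC_j \mid A_k\,(MN + A_iA_jC_k^2)$, and for your $Y_{ij}$ only $C_iC_j \mid M\,Y_{ij}$ together with the separate statements $C_i \mid Y_{ij}$ and $C_j \mid Y_{ij}$. To reach \eqref{divis1} and \eqref{divis2} you must still treat primes $p$ dividing $C_iC_j$ together with $A_k$ (resp.\ $M$), and when $p \mid \gcd(C_i,C_j)$ the two one-sided divisibilities of $Y_{ij}$ do not combine. Your proposed fix is only a sketch: the congruence $A_jC_k^2 \equiv A_kC_j^2 \pmod{p^{v_p(C_i)}}$ is asserted rather than proved (mod $p$ it follows from reduction at primes of good reduction, but the higher-power version requires an actual formal-group computation, and primes of bad reduction, including $p=2,3$, are not addressed), and the concluding ``case analysis on $v_p(A_j)$'' is never carried out. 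As written, \eqref{divis1} and \eqref{divis2} are therefore not established. The paper closes exactly this point with a short algebraic device that avoids all local analysis: it proves $C_i^2C_j^2 \mid (MN + A_iA_jC_k^2)^2$ by expanding the square and reducing modulo $C_i^2C_j^2$, writing $M^2$, $N^2$, and $2MN$ in terms of the $A_i$'s via \eqref{obs6}, \eqref{obs4}, \eqref{obs5} (every term then cancels, since $C^2 \equiv 0$), and similarly $C_i^2C_j^2 \mid Y_{ij}^2$; since $n^2 \mid m^2$ forces $n \mid m$ for integers, the clean divisibilities follow. You should either adopt that squaring trick or fully work out the valuation-theoretic argument, which is considerably longer.
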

\begin{proof}
Because $P_1 + P_2 + P_3 = \mathcal{O}$ by hypothesis, we have \begin{equation*}
    \left(x - \frac{A_1}{C_1^2}\right)\left(x - \frac{A_2}{C_2^2}\right)\left(x - \frac{A_3}{C_3^2}\right) = x^3 + a_4x + a_6 - \left(\frac{m}{l}\,x + \frac{n}{l}\right)^2,
\end{equation*} or equivalently, \begin{equation*}
    \left(x - \frac{a_1}{C_1^2v}\right)\left(x - \frac{a_2}{C_2^2v}\right)\left(x - \frac{a_3}{C_3^2v}\right) = \left(x-\frac{u}{v}\right)^3 + a_4\left(x-\frac{u}{v}\right) + a_6 - \left(\frac{m}{l}\left(x-\frac{u}{v}\right)+\frac{n}{l}\right)^2.
\end{equation*} Expanding these equations, cross-multiplying, and comparing coefficients yields the first six identities. To obtain the divisibility condition~\eqref{divis1}, we will show that $C_i^2 C_j^2 \divides (MN + A_iA_jC_k^2)^2$. Using identities~\eqref{obs4}, \eqref{obs5}, and \eqref{obs6} to expand the product $(MN + A_iA_jC_k^2)^2$, we find that all terms vanish modulo $C_i^2C_j^2$. Similarly, condition~\eqref{divis2} follows by squaring and applying identities~\eqref{obs4}, \eqref{obs5}, and \eqref{obs6}.
\end{proof}

\begin{proof}[Proof of Lemma \ref{TheoremBhargava}]
First, we note that $\rho = v\cdot\frac{C}{g_1\,g_2\,g_3} \in \Z$ since $g_i\mid C_i$ for \(i \in \{1, 2, 3\}\). Second, we prove that $\psi_i = v\cdot\frac{q_i}{g_j\,g_k} \in \Z$ for each $i$. By the expressions for $q_i$ and $b$, we have 
\[
    \psi_i = \frac{v^3}{g_i\,g_j\,g_k}\cdot N\ell_i - \frac{u\,v^2\,\ell_i+v\,C_i\,\mu_i}{g_i\,g_j\,g_k}\cdot M.
\]

\noindent Since $g_i \mid v$, we have that $\frac{v^3}{g_i\,g_j\,g_k} \cdot N\ell_i\in\Z$. Furthermore, by identity \eqref{obs6} we have
\begin{align*}
\left(Mv\cdot\frac{u\,v\,\ell_i+C_i\,\mu_i}{g_i\,g_j\,g_k}\right)^2 = \left(A_i\,\frac{C_j^2\,C_k^2\,v^2}{g_i^2\,g_j^2\,g_k^2}+A_j\,\frac{C_i^2\,C_k^2\,v^2}{g_i^2\,g_j^2\,g_k^2}+A_k\,\frac{C_i^2\,C_j^2\,v^2}{g_i^2\,g_j^2\,g_k^2}\right)(u\,v\,\ell_i+C_i\,\mu_i)^2,
\end{align*}
which is clearly an integer.

Third, we prove that $\phi_i = \frac{v\,q_j\,q_k-a_i\,g_j\,g_k}{C\,g_i} \in \Z$. By the definitions of $q_i, \mu_i,$ and $\ell_i,$ we have
\[
    (C \,g_i\, g_j\, g_k)\, \phi_i = v\,(b\,v\,\ell_j-MC_j\,\mu_j)(b\,v\,\ell_k-MC_k\,\mu_k)-a_i\,(C_j^3\,\mu_j+a_j\,v\,\ell_j)(C_k^3\,\mu_k+a_k\,v\,\ell_k).
\]
We expand this expression as a linear combination of $\ell_j\, \ell_k, \, \mu_j \,\mu_k , \, \ell_j \,\mu_k, \, \ell_k \,\mu_j$, and we prove that each term is divisible by $Cg_i\,g_j\,g_k$. The first term in the expansion is given by $v\,\ell_j\,\ell_k\,(b^2\,v^2-a_i\,a_j\,a_k\,v)$. Note that by \eqref{obs1}, we have that
\[
v\,\ell_j\,\ell_k(b^2\,v^2-a_i\,a_j\,a_k\,v) = v\,\ell_j\,\ell_k\,\cdot (-C^2\,d),
\]
and since $Cg_i\,g_j\,g_k\mid C^2$, this term is divisible by $C\,g_i\,g_j\,g_k$.
The next term in the expansion is given by $C_j\,C_k\,\mu_j\,\mu_k\,(M^2\,v-a_i\,C_j^2\,C_k^2)$. By \eqref{obs3}, we see that
\[
C_j\,C_k\,\mu_j\,\mu_k\,(M^2\,v-a_i\,C_j^2\,C_k^2)=C_j\,C_k\,\mu_j\,\mu_k\,(a_j\,C_i^2\,C_k^2+a_k\,C_i^2\,C_j^2-3C^2\,u),
\]
which is divisible by $C\,g_i\,g_j\,g_k$.
Next, consider the term $-(bvM+a_i\,a_j\,C_k^2)\,v\,\ell_j\,C_k\,\mu_k$. Note that
\[
  -(bvM+a_i\,a_j\,C_k^2)v\,\ell_j\,C_k\,\mu_k = -((Nv-Mu)\,vM+(A_i\,v+C_i^2\,u)(A_j\,v+C_j^2\,u)C_k^2)v\,\ell_j\,C_k\,\mu_k.  
\]
Using identity \eqref{obs6}, we see that the right hand side of the above equation is equivalent modulo $C\,g_i \,g_j \,g_k$ to $-(MN+A_i\,A_j\,C_k^2)v^3\,\ell_j\,C_k\,\mu_k.$ Since $g_i\,g_j\,g_k\mid v^3$ and $C_i\,C_j \mid MN+A_i\,A_j\,C_k^2$ by condition~\eqref{divis1}, we have that $(Cg_i\, g_j\, g_k) \phi_i \equiv 0 \pmod {Cg_i\, g_j\, g_k}$. The same argument can be applied to the remaining term, $-(bvM+a_i\,a_k\,C_j^2)v\,\ell_k\,C_j\,\mu_j$. Thus, we conclude that $\phi_i \in \Z$.

Finally, we use a similar strategy to show that $\theta$ is an integer. In particular, we show that $(C^2\, g_1\, g_2 \,g_3) \theta$ is divisible by $C^2 \,g_1 \,g_2\, g_3.$ We again make the substitutions $g_i \,q_i = bv\ell_i - mC_i\, \mu_i$ and $g_i^2 = C_i^3 \,\mu_i + a_i\, v \ell_i$, and we see that $C^2 \,g_1 \,g_2 \,g_3 \,\theta$ is a linear combination of $\mu_i \,\mu_j \,\mu_k,$ $\ell_i \,\ell_j \,\mu_k$, $\ell_i\, \ell_j \,\ell_k,$ $\ell_i \,\mu_j\, \mu_k$ and their symmetric counterparts. We show that the coefficients of all such terms are divisible by $C^2\, g_1\, g_2 \,g_3.$ First, we consider the $\mu_1\, \mu_2 \,\mu_3$ term, which by identity~\eqref{obs3} has coefficient
    \begin{align*}
        2b C^3 + CM\,(a_1\, C_2^2\, C_3^2 + a_2\, C_1^2 \,C_3^2 + a_3\, C_1^2\, C_3^2)-CvM^3 = 2bC^3+3C^3\, uM.
    \end{align*}
Since $C^2\, g_1 \,g_2 \,g_3 \mid C^3,$ this coefficient is divisible by $C^2\, g_1 g_2 \,g_3$. Next, the $\ell_i \,\ell_j \,\mu_k$ term has coefficient $MC_k\, v (-b^2 v^2 + va_i \,a_j\, a_k) = MC_k\, v(C^2\, d),$ which is also divisible by $C^2\, g_1 \,g_2 \,g_3$. The coefficient of $\ell_1\, \ell_2\, \ell_3$ can be addressed similarly. Finally, for the $\ell_i\, \mu_j\, \mu_k$ coefficient, we expand the coefficient using the definitions of $b$ and $a_i.$ It is easy to see that $C^2 \,g_1\, g_2\, g_3$ divides the $u^2 v$ and the $uv^2$ terms using identity~\eqref{obs6}. We now address the $v^3$ term using \eqref{obs5} and \eqref{obs6}:
    \begin{align*}
        v^3 \,&C_2\, C_3 (A_1\,C_2^2\, C_3^2N +  A_1\, A_2\,C_3^2 M + C_2^2\, A_1 \,A_3 M + M^2N) \\
        &= v^2 \,C_2 \,C_3 \left((M^2\, N - A_2\, C_1^2\, C_3^2N - A_3 \,C_1^2\, C_2^2N) + M(-2MN + a_4\,C^2  -A_2 \,A_3\,C_1^2)+M^2 N\right) \\
        &= v^2 \,C_2 \,C_3\left( -C_1^2( A_2\, C_3^2\,N + A_3\, C_2^2\,N +  A_2\, A_3M) + a_4\, C^2  M\right).
    \end{align*}
Because $A_i\, C_j^2\,N + A_j \,C_i^2\,N + A_i\, A_j\, M$ is divisible by $C_i\, C_j$ by identity~\eqref{divis2}, we see that this term is indeed divisible by $C^2\, g_1\, g_2 \,g_3.$ This concludes the proof that $\theta$ is integral.
\end{proof}

Using Lemma \ref{TheoremBhargava}, we are now in a position to prove that $\Phi_{u,v}$ is a homomorphism.

\begin{proof}[Proof of Theorem~\ref{Theorem : Momomorphism}]
Given any three finite points $P_i \in E(\Q)$ that add to $\mathcal{O}$, we have exhibited an integral, primitive Bhargava cube associated to the forms $\Phi_{u,v}(P_i)$, implying that $\Phi_{u,v}(P_1) + \Phi_{u,v}(P_2) + \Phi_{u,v}(P_3)$ is the identity form. It is easily seen from the definition of $\Phi_{u, v}$ that it respects inverses, and so $\Phi_{u,v}$ is a homomorphism whenever $(u,v)$ is map-suitable for $E$: indeed, 
for any $P_1, P_2 \in E(\mathbb{Q})$ such that $P_1, P_2, P_1+P_2 \neq \mathcal{O}$, the Bhargava cube shows that $\Phi_{u,v}(P_1)+\Phi_{u,v}(P_2) + \Phi_{u,v}(-(P_1+P_2)) = 0$, so that $\Phi_{u,v}(P_1+P_2) = \Phi_{u,v}(P_1) + \Phi_{u,v}(P_2)$. %Thus, $\Phi_{u,v}$ is a homomorphism whenever $(u,v)$ is map-suitable.
\end{proof}

We conclude this section by giving conditions on $u$ and $v$ under which $\Phi_{u,v}$ is injective on the torsion subgroup, which allow us to identify subgroups of $\CL(-D_E(u,v))$ isomorphic to $E_{\mathrm{tor}}(\Q)$.

\begin{definition}\label{kernel-suitable}
Let $A_0 := \max\{A : (A, B) \in E_{\textrm{tor}}(\Q)\}$; we call a pair $(u,v)\in\Z^2$ \textbf{kernel-suitable for $\textbf{E}$} if $v > 1$ and $\frac{d_E(u, v) - uv}{v^2} > A_0$.
\end{definition}

\begin{corollary}\label{injective}
Let $(u,v)$ be kernel-suitable for $E$. Then $\Phi_{u,v}$ restricted to $E_{\mathrm{tor}}(\Q)$ is injective. 
\end{corollary}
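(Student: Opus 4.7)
Since $\Phi_{u,v}$ is a homomorphism by Theorem~\ref{Theorem : Momomorphism}, the plan is to show that its kernel on $E_{\mathrm{tor}}(\Q)$ is trivial by ruling out any non-identity torsion point satisfying the conclusion of Theorem~\ref{TheoremPhi}(2). Let $P = \left(\tfrac{A}{C^2}, \tfrac{B}{C^3}\right) \in E_{\mathrm{tor}}(\Q) \setminus \{\mathcal{O}\}$ and suppose $\Phi_{u,v}(P)$ is trivial in $\CL(-D_E(u,v))$. By Theorem~\ref{TheoremPhi}(2), either $v \mid C$ or $\tfrac{va}{g^2} \geq d_E(u,v)$, where $a = Av + C^2u$ and $g = \gcd(C, v)$.

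The first step is to eliminate the possibility $v \mid C$. Since $E$ has the integral Weierstrass model $y^2 = x^3 + a_4 x + a_6$ with $a_4, a_6 \in \Z$, the Nagell--Lutz theorem forces every nontrivial torsion point of $E(\Q)$ to have integer coordinates, so $C = 1$. Combined with the kernel-suitability hypothesis $v > 1$, this rules out $v \mid C$ and also gives $g = \gcd(1, v) = 1$.

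With $C = 1$ and $g = 1$, the remaining alternative $\tfrac{va}{g^2} \geq d_E(u,v)$ becomes $v(Av + u) \geq d_E(u,v)$, which rearranges to
\[
A \;\geq\; \frac{d_E(u,v) - uv}{v^2}.
\]
By the definition of kernel-suitability, the right-hand side is strictly greater than $A_0 = \max\{A : (A, B) \in E_{\mathrm{tor}}(\Q)\}$, so $A > A_0$. This contradicts $P$ being a torsion point, and hence $\ker\bigl(\Phi_{u,v}|_{E_{\mathrm{tor}}(\Q)}\bigr) = \{\mathcal{O}\}$, giving injectivity.

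The only substantive step is the appeal to Nagell--Lutz to guarantee $C = 1$; the rest is an algebraic rearrangement of the inequality already supplied by Theorem~\ref{TheoremPhi}(2) to match the kernel-suitability bound. Once this integrality is in hand, the argument is immediate.
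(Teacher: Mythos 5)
Your proposal is correct and follows essentially the same route as the paper: both invoke Theorem~\ref{TheoremPhi}(2) for a torsion point in the kernel and derive $A \geq \frac{d_E(u,v)-uv}{v^2} > A_0$, contradicting the definition of $A_0$. The only difference is that you make explicit (via Nagell--Lutz and the hypothesis $v>1$) the integrality $C=1$, $g=1$ and the elimination of the $v \mid C$ alternative, which the paper's one-line proof leaves implicit by writing torsion points as $(A,B)$ with integer coordinates.
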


\begin{proof}
If $P = (A, B) \in E_{\mathrm{tor}}(\Q)$ is in the kernel of $\Phi_{u,v}$, then by Theorem~\ref{TheoremPhi}, we have that $Av+u \geq \frac{d_E(u, v)}{v}$, so $\frac{d_E(u, v) - uv}{v^2} \leq A$, a contradiction.
\end{proof}

\section{Lower bound for $h(-D_E(u,v))$}\label{sec3}
We now use the map $\Phi_{u,v}$ to obtain an effective lower bound for the class number $h(-D_E(u,v))$. We will use Propositions 3.2 and 3.3 from \cite{G-O}, which combine to give the following result.

\begin{proposition}[Griffin-Ono]\label{prop: linearlyindependentboundedheights}
Let $\mathcal{P}$ be a set of $r$ linearly independent points in $E(\Q)$, and let $G$ be a subgroup of $E_{\mathrm{tor}}(\Q)$. Then for $T > \frac{d(\mathcal{P})}{4}$, we have \begin{equation*}
    \#\{P \in E(\Q) : \hat{h}(P) \leq T\} \geq c_G(\mathcal{P})\cdot \big(T^{\frac{r}{2}} - r\sqrt{d(\mathcal{P})} \cdot T^{\frac{r-1}{2}}\big).
\end{equation*}
\end{proposition}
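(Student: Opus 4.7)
The plan is to reduce the count to a lattice-point count for a positive definite quadratic form and then apply a standard covering-volume comparison. Because the canonical height $\hat h$ is a quadratic form on $E(\Q)\otimes\R$ vanishing on $E_{\mathrm{tor}}(\Q)$, I have
$$\hat h\Big(\sum_i n_i P_i + t\Big) \;=\; \hat h\Big(\sum_i n_i P_i\Big) \;=\; \sum_{i,j} n_i n_j \langle P_i, P_j\rangle \;=:\; Q(n_1,\dots,n_r)$$
for every $(n_1,\dots,n_r)\in\Z^r$ and $t \in G$, and $Q$ is positive definite by the linear independence of $\mathcal{P}$. The map $\Z^r \times G \to E(\Q)$, $(n,t)\mapsto\sum_i n_i P_i + t$, is injective (again by linear independence modulo torsion), so after multiplying by $|G|$ the proposition reduces to showing
$$\#\{n \in \Z^r : Q(n) \leq T\} \;\geq\; \frac{\Omega_r}{\sqrt{R_\Q(\mathcal{P})}}\Big(T^{r/2} - r\sqrt{d(\mathcal{P})}\,T^{(r-1)/2}\Big).$$

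I would then carry out this lattice-point count by volume comparison. The ellipsoid $E_T := \{x \in \R^r : Q(x) \leq T\}$ has Lebesgue volume $\Omega_r T^{r/2}/\sqrt{R_\Q(\mathcal{P})}$ (since the Gram matrix of $Q$ has determinant $R_\Q(\mathcal{P})$), and the centered fundamental domain $F := [-\tfrac12,\tfrac12]^r$ for $\Z^r$ has Lebesgue volume $1$. Since $Q$ is positive definite it attains its maximum over $F$ at a vertex, so the covering radius of $\Z^r$ in the $\sqrt{Q}$-metric is at most $\rho := \tfrac12 \max_{\delta \in \{\pm 1\}^r} \sqrt{Q(\delta)}$. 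The key---but elementary---estimate is $\rho \leq \sqrt{d(\mathcal{P})}$: because $d(\mathcal{P}) \geq 2Q(\delta)$ for every $\delta \in \{\pm 1\}^r$, I obtain $\rho^2 \leq d(\mathcal{P})/8$.

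For any $x$ satisfying $Q(x) \leq (\sqrt T - \rho)^2$, the lattice point $n \in \Z^r$ with $x - n \in F$ obeys $\sqrt{Q(n)} \leq \sqrt{Q(x)} + \rho \leq \sqrt T$ by the triangle inequality in the $\sqrt Q$-norm, so $n \in E_T \cap \Z^r$. Since every $x \in \R^r$ lies in some such translate $n + F$, volume comparison yields
$$\#(E_T \cap \Z^r) \;\geq\; \operatorname{vol}\{x : Q(x) \leq (\sqrt T - \rho)^2\} \;=\; \frac{\Omega_r}{\sqrt{R_\Q(\mathcal{P})}}(\sqrt T - \rho)^r.$$
The hypothesis $T > d(\mathcal{P})/4$ forces $T > 2\rho^2 > \rho^2$, so Bernoulli's inequality delivers $(\sqrt T - \rho)^r \geq T^{r/2} - r\rho\, T^{(r-1)/2}$; replacing $\rho$ by the larger quantity $\sqrt{d(\mathcal{P})}$ completes the reduction.

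I expect no conceptual difficulty---positive definiteness of $\hat h$ on $E(\Q) \otimes \R$, its vanishing on torsion, and the convex-body volume comparison are classical. The main obstacle is bookkeeping: one must use the \emph{centered} (rather than the standard) fundamental parallelepiped so that the boundary correction matches the definition of $d(\mathcal{P})$, and the factors of $2$ between the covering radius, the vertex norms of $Q$, and $d(\mathcal{P})$ must be tracked carefully so that the threshold $d(\mathcal{P})/4$ in the hypothesis simultaneously secures positivity of $\sqrt T - \rho$ and validity of Bernoulli's inequality.
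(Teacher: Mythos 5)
Your argument is correct: the reduction to counting lattice points of the positive definite form $Q(n)=\hat{h}\big(\sum_i n_i P_i\big)$ (using that $\hat{h}$ kills torsion and that $(n,t)\mapsto \sum_i n_iP_i+t$ is injective), the covering-radius bound $\rho^2\le d(\mathcal{P})/8$ from the vertices of the centered cube, and the volume comparison followed by Bernoulli's inequality all check out, with the hypothesis $T>d(\mathcal{P})/4$ used exactly where needed to make $\sqrt{T}-\rho$ positive. The paper itself gives no proof---it imports the statement from Propositions 3.2 and 3.3 of Griffin--Ono---and their argument is the same geometry-of-numbers count, so your proposal amounts to a correct self-contained version of the cited proof.
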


Recall the definition of $T_E(u,v, \varepsilon)$ from \eqref{T}. To prove our lower bound, we also require a third notion of suitability.

\begin{definition}\label{bound-suitable}
We say that a pair $(u, v) \in \Z^2$ is $\bm{\varepsilon}$-\textbf{bound-suitable for $\mathcal{P}$} if $u, v > 0$ and
\begin{equation*}
\Big(\frac{1}{4}(uv+v^2)\Big)^{1+\varepsilon'} e^{2(1+ \varepsilon')(\delta(E)+d(\mathcal{P}))} < d_E(u,v) < (v^2(uv+v^2))^{1+\varepsilon'},
\end{equation*}
where $\varepsilon':= \frac{\varepsilon}{1-\varepsilon}$. 

\end{definition}

\noindent
Note that the first inequality in the above definition is equivalent to $T_E(u, v, \varepsilon) > d(\mathcal{P})/4$.

By showing that distinct points in $E(\Q)$ with canonical height bounded by $T_E(u,v, \varepsilon)$ map to distinct elements of the class group under $\Phi_{u,v}$, we obtain the following lower bound for $h(-D_E(u,v))$.

\begin{theorem}\label{suitability}
    Assume the hypotheses from Proposition~\ref{prop: linearlyindependentboundedheights}. Fix $0 < \varepsilon < \frac{1}{2}$, and let $\varepsilon' := \frac{\varepsilon}{1-\varepsilon}$. If $-D_E(u,v)$ is a negative fundamental discriminant for which $(u,v) \in \Z^2$ is map-suitable for $E$ and $\varepsilon$-bound-suitable for $\mathcal{P}$, then $$h(-D_E(u,v)) \geq c_G(\mathcal{P})\cdot \left( T_E(u,v, \varepsilon)^{\frac{r}{2}} - r \sqrt{d(\mathcal{P})} \cdot T_E(u,v, \varepsilon)^{\frac{r-1}{2}}\right).$$
\end{theorem}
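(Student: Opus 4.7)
The plan is to use the homomorphism $\Phi_{u,v}$ to inject the set $S := \{P \in E(\Q) : \hat{h}(P) \leq T_E(u,v,\varepsilon)\}$ into $\CL(-D_E(u,v))$ and then count via Proposition~\ref{prop: linearlyindependentboundedheights}. Two things must be verified: the hypothesis $T_E(u,v,\varepsilon) > d(\mathcal{P})/4$ of Proposition~\ref{prop: linearlyindependentboundedheights}, and the injectivity of $\Phi_{u,v}$ on $S$.

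For the first, note that by construction $(1+\varepsilon')(1-\varepsilon) = 1$, so raising the left inequality in the definition of $\varepsilon$-bound-suitability to the $(1-\varepsilon)$th power yields $\frac{4\,d_E(u,v)^{1-\varepsilon}}{uv+v^2} > e^{2(\delta(E)+d(\mathcal{P}))}$; taking $\tfrac{1}{8}\log$ of both sides is exactly the statement $T_E(u,v,\varepsilon) > \tfrac{d(\mathcal{P})}{4}$.

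For the injectivity, since $\Phi_{u,v}$ is a group homomorphism by Theorem~\ref{Theorem : Momomorphism}, it is enough to show that for any distinct $P, Q \in S$, the difference $R := P - Q$ cannot lie in $\ker(\Phi_{u,v})$. The parallelogram identity for the N\'eron--Tate height gives $\hat{h}(R) \leq 4 T_E(u,v,\varepsilon)$, and Silverman's estimate $h_W(\,\cdot\,) \leq 2\hat{h}(\,\cdot\,) + 2\delta(E)$ (which is what $\delta(E)$ encodes) then yields the Weil-height bound $H(R) \leq \frac{4\, d_E(u,v)^{1-\varepsilon}}{uv+v^2}$. Applying Theorem~\ref{TheoremPhi}(2) to the would-be kernel element $R$, one of two cases must hold. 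In the first, $v \mid C_R$ forces $H(R) \geq C_R^2 \geq v^2$; combining this with the right inequality $d_E(u,v)^{1-\varepsilon} < v^2(uv+v^2)$ of bound-suitability yields a contradiction. In the second, $\frac{v\,a_R}{g_R^2} \geq d_E(u,v)$, combined with the elementary estimate $\frac{v\,a_R}{g_R^2} \leq H(R)(uv+v^2)$ coming from $a_R = A_R v + C_R^2 u \leq H(R)(u+v)$, forces $d_E(u,v)^{\varepsilon} \leq 4$, which contradicts the lower bound on $d_E(u,v)$ implied by the left inequality of bound-suitability.

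The principal technical obstacle is the bookkeeping of constants in these two kernel cases: the factor $4$ inside the logarithm in the definition of $T_E(u,v,\varepsilon)$ and the $\varepsilon$-slack in the exponents of bound-suitability are calibrated precisely so that both cases of Theorem~\ref{TheoremPhi}(2) produce strict inequalities. With injectivity of $\Phi_{u,v}\big|_S$ established, we obtain $h(-D_E(u,v)) = |\CL(-D_E(u,v))| \geq |\Phi_{u,v}(S)| = |S|$, and Proposition~\ref{prop: linearlyindependentboundedheights} applied at $T = T_E(u,v,\varepsilon)$ immediately delivers the stated lower bound.
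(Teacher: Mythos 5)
Your overall strategy is the same as the paper's: show that $\Phi_{u,v}$ separates points of canonical height at most $T_E(u,v,\varepsilon)$ by ruling out the two kernel cases of Theorem~\ref{TheoremPhi}(2), verify $T_E(u,v,\varepsilon) > \frac{d(\mathcal{P})}{4}$ from the left bound-suitability inequality (your derivation of this is correct), and conclude with Proposition~\ref{prop: linearlyindependentboundedheights}. The gap lies exactly in the constant bookkeeping that you yourself flag as the crux and then assert is ``calibrated precisely'': with the form of Silverman's estimate you quote, $h_W(R) \leq 2\hat{h}(R) + 2\delta(E)$, you only obtain $H(R) \leq \frac{4\,d_E(u,v)^{1-\varepsilon}}{uv+v^2}$, and with that extra factor of $4$ neither of your two case-closing contradictions actually follows. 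In the case $v \mid C_R$, the chain $v^2 \leq H(R) \leq \frac{4\,d_E^{1-\varepsilon}}{uv+v^2}$ yields only $d_E^{1-\varepsilon} \geq \frac{1}{4}v^2(uv+v^2)$, which is perfectly consistent with the right bound-suitability inequality $d_E^{1-\varepsilon} < v^2(uv+v^2)$. In the case $\frac{v\,a_R}{g_R^2} \geq d_E$, your estimate gives $d_E \leq 4\,d_E^{1-\varepsilon}$, i.e.\ $d_E^{\varepsilon} \leq 4$, and this is \emph{not} contradicted by the left bound-suitability inequality: that inequality bounds $d_E$ below by a quantity that does not grow as $\varepsilon \to 0$, whereas $d_E^{\varepsilon} \leq 4$ only fails once $d_E > 4^{1/\varepsilon}$, which can be astronomically larger. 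So, as written, injectivity of $\Phi_{u,v}$ on the height ball is not established.

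The fix is the sharper form of Silverman's bound that the definition of $\delta(E)$ is built to accommodate: since $\frac{5}{3} \geq 0.973 + \log 2$, Silverman's theorem gives $h_W(R) \leq 2\bigl(\hat{h}(R) + \delta(E) - \log 2\bigr)$, and the resulting $-2\log 2$ exactly cancels the factor $4$ inside the logarithm in $T_E(u,v,\varepsilon)$, yielding $H(R) \leq \frac{d_E(u,v)^{1-\varepsilon}}{uv+v^2}$ with no factor of $4$. With this, the case $\frac{v\,a_R}{g_R^2} \geq d_E$ dies immediately, since $\frac{v\,a_R}{g_R^2} \leq v\,a_R \leq H(R)(uv+v^2) \leq d_E^{1-\varepsilon} < d_E$ (note $d_E > 1$ by the left inequality), with no appeal to the left bound-suitability inequality beyond that; and the case $v \mid C_R$ gives $d_E^{1-\varepsilon} \geq v^2(uv+v^2)$, contradicting the strict right inequality of $\varepsilon$-bound-suitability. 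This corrected accounting is precisely the paper's argument, so once the Silverman step is repaired your proof coincides with it.
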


\begin{proof}
Suppose that $(u,v)$ is map-suitable for $E$ and $\varepsilon$-bound-suitable for $\mathcal{P}$, and consider two points $P_1, P_2 \in E(\mathbb{Q})$ with canonical height bounded by $T_E(u,v, \varepsilon)$ and $\Phi_{u,v}(P_1) = \Phi_{u,v}(P_2)$. We show that $P := P_1 - P_2$ is the point of infinity.

%Since $\Phi_{u,v}$ is a homomorphism, it suffices to show that $P:= P_1 - P_2 = \mathcal{O}$.
Suppose that $P \neq \mathcal{O}$, so in particular $P = (\frac{A}{C^2}, \frac{B}{C^3})$ with $\gcd(A,C) = \gcd(B,C) = 1$ and $C >0$. Note that the triangle inequality implies $\hat{h}(P) \leq 4T_E(u,v, \varepsilon).$ By a theorem of Silverman \cite[Theorem~1.1]{Silverman},
\begin{equation*} 
    h_W(P) \leq 2\,(\hat{h}(P) + \delta(E) - \log 2) \leq 2\,(4T_E(u,v, \varepsilon) + \delta(E) - \log 2) \leq \log \bigg( \frac{d_E(u,v)^{1-\varepsilon}}{uv+v^2}\bigg).
\end{equation*}
In particular, this means that
\begin{equation}\label{IneqHeight}
    H(P)  = e^{h_W(P)}\leq \frac{d_E(u,v)^{1-\varepsilon}}{uv+v^2}.
\end{equation}

Since $(u,v)$ is map-suitable for $E$, we have $\Phi_{u,v}(P) = 0$, so
by Theorem \ref{TheoremPhi} either $\frac{va}{g^2} \geq d_E(u,v)$ or $v \mid C.$ However, by the definition of heights and equation \eqref{IneqHeight}, we have
    $$\frac{va}{g^2} \leq va = v|Av+C^2 u|\, \leq H(P)(uv+v^2) < d_E(u,v).$$
Thus, we only need to consider the case $v \mid C$, which implies that
    $$v \leq C \leq \max(|A|,C^2)^{\frac{1}{2}} = H(P)^{\frac{1}{2}}.$$
By \eqref{IneqHeight}, together with the above, we get $v^2 \leq \frac{d_E(u, v)^{1-\varepsilon}}{uv+v^2}$ so that we have
    $$d_E(u,v) \geq (v^2(uv+v^2))^{1+\varepsilon'}, \quad \text{where } \varepsilon' = \frac{\varepsilon}{1-\varepsilon}.$$

\noindent
However, since $(u,v)$ is $\varepsilon$-bound-suitable for $\mathcal{P}$, we have $d_E(u,v) < (v^2(uv+v^2))^{1+\varepsilon'}$. It follows that $ P = \mathcal{O}$, and hence that any two rational points $P_1 \neq P_2$ with height bounded by $T_E(u,v,\varepsilon)$ map to distinct forms.

From here, it suffices to count the number of points whose height is bounded by $T_E(u,v,\varepsilon)$. Since $(u,v)$ is \(\varepsilon\)-bound-suitable for $\mathcal{P}$, we have $T_E(u,v, \varepsilon) > \frac{d(\mathcal{P})}{4}$, so Proposition~\ref{prop: linearlyindependentboundedheights} implies that
\begin{equation*}
    h(-D) \geq \#\{P \in E(\Q): \hat{h}(P) \leq T_E(u,v, \varepsilon)\} \geq c_G(\mathcal{P})\cdot \big(T_E(u,v, \varepsilon)^{\frac{r}{2}} - r\sqrt{d(\mathcal{P})} \cdot T_E(u,v, \varepsilon)^{\frac{r-1}{2}}\big).
\end{equation*}
\end{proof}

\section{The Square-free Sieve}\label{sec4}

In this section, we will show that for any $\varepsilon_2 > 0$, the number of negative fundamental discriminants of the form $-X < -D_E(u,v) < 0$ that satisfy the conditions of Theorem \ref{BigTheorem} is asymptotically greater than $X^{\frac{1}{2}-\varepsilon_2}$. We study discriminants $-X<-D_E(x+ny,y)<0$, where $n \in \mathbb{Z}^+,$ such that $0<x,y<\lambda(n) X^{\frac14}$ for some constant $\lambda(n)>0$ that depends on the coefficients of the polynomial $-D_E(u,v)$. We use the following theorem, which is a special case of Theorem~3 and Proposition~5 of Gouv\^ea and Mazur \cite{GouveaMazur}.

\begin{theorem}[Gouv\^ea-Mazur]\label{Gouvea-Mazur}
Let $F(u,v) = v\cdot f(u,v)$, where $f(u,v)$ is a homogeneous polynomial of degree 3 with coefficients in $\Z$. Suppose that $F(u,v)$ has no square factors over $\Z[u,v]$, and that the greatest common divisor of all coefficients of $F(u,v)$ is 1. For integers $M, a_0, b_0$, suppose one of the two following conditions holds:
\begin{enumerate}
    \item We have $M = 2^{k}$ for $k \geq 2$, and $a_0,b_0$ are odd integers such that $F(a_0, b_0) \not\equiv 0 \pmod{4}$.
    \item The integers $a_0$ and $b_0$ are relatively prime to $M$ and \(F(a_0, b_0)\) is a unit modulo \(M\).
\end{enumerate}

\noindent
Finally, let $N(Y)$ denote the number of pairs of integers $(a,b)$ such that $0 \leq a, b \leq Y$, $a \equiv a_0 \pmod{M}$ and $b \equiv b_0 \pmod{M}$, and $F(a,b)$ is square-free.
% \begin{enumerate}
%     \item $0 \leq a, b \leq Y$,
%     \item $a \equiv a_0 \pmod{M}$ and $b \equiv b_0 \pmod{M}$, and
%     \item $F(a,b)$ is square-free.
% \end{enumerate}
Then there exists an \(A > 0\) such that
    $$N(Y) = A\,Y^2+O\left(\frac{Y^2}{\log(Y)^{\frac{1}{2}}}\right) \quad \text{as} \quad Y \rightarrow \infty.$$
\end{theorem}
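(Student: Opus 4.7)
The plan is to run a square-free sieve on $F(u,v)$ by writing the indicator of square-freeness as a M\"obius sum, $\mathbf{1}[F(a,b)\text{ square-free}] = \sum_{d^2 \mid F(a,b)} \mu(d)$. Swapping the order of summation gives
\[
N(Y) = \sum_{d \geq 1} \mu(d)\, S_d(Y),
\]
where $S_d(Y) := \#\{(a,b) \in [0,Y]^2 : a \equiv a_0, \ b \equiv b_0 \pmod{M}, \ d^2 \mid F(a,b)\}$. The task is then to split this sum at a carefully chosen threshold $D = D(Y)$, extract a main term from $d \leq D$, and control the tail.

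For the small-$d$ range, I would use equidistribution of $(a,b)$ modulo $\mathrm{lcm}(M,d^2)$: if $\rho(d)$ denotes the number of admissible residues modulo $M^2 d^4$ satisfying $F \equiv 0 \pmod{d^2}$, then
\[
S_d(Y) = \frac{\rho(d)}{M^2 d^4}\, Y^2 + O\!\left(\frac{\rho(d)}{d^2}\, Y\right).
\]
Since $F(u,v)$ is square-free as a polynomial with content $1$, a Hensel-type argument shows that $\rho$ is essentially multiplicative with $\rho(p) \ll p^2$ uniformly in $p$, so summing over $d$ produces an absolutely convergent Euler product
\[
A = \frac{1}{M^2}\sum_{d=1}^{\infty} \frac{\mu(d)\,\rho(d)}{d^4}.
\]
Conditions (1) and (2) on $(a_0,b_0,M)$ are exactly what is needed to ensure that the local densities at primes dividing $2M$ are nonzero, so that $A > 0$ and the sieve does not collapse.

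The main obstacle, and the source of the $(\log Y)^{-1/2}$ saving rather than a weaker power, is controlling the tail $\sum_{d > D} |S_d(Y)|$. Since $|F(a,b)| \ll Y^4$, only $d \ll Y^2$ contribute at all. In the intermediate range $D < d \leq Y$, I would use a geometric point-counting estimate on the curve $F(a,b) \equiv 0 \pmod{d^2}$, valid because $F$ has no repeated irreducible factors; this yields $S_d(Y) \ll (Y/d)^2 + (\text{lower order})$, and summation gives $O(Y^2/D)$. The delicate part is the range $d$ near $Y^2$, where one instead counts integers $n \leq CY^4$ with an unusually large square divisor. The half-log saving comes from a Hooley/Greaves-type refinement: the number of integers up to $X$ with a square divisor in a logarithmic window near $\sqrt{X}$ is $O(X/(\log X)^{1/2})$, rather than the trivial $O(X/(\log X))$ one would obtain from a direct bound. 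Choosing $D$ as an appropriate power of $\log Y$ and combining the small- and large-$d$ estimates then yields $N(Y) = AY^2 + O(Y^2/(\log Y)^{1/2})$, as claimed.
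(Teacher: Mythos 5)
The paper does not prove this statement at all: it is quoted verbatim as a special case of Theorem~3 and Proposition~5 of Gouv\^ea--Mazur, so your proposal should be judged as an attempted reproof of that external result. Your framework for the easy part is the standard one and matches theirs: M\"obius inversion, extraction of the main term from small $d$ via counting solutions of $F\equiv 0 \pmod{d^2}$ in residue classes, multiplicativity of the local densities, and the observation that hypotheses (1)/(2) force the local densities at primes dividing $2M$ to be positive so that $A>0$. Up to bookkeeping this part is fine.

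The genuine gap is in the tail, which is the entire difficulty of the theorem. First, your claimed bound $S_d(Y)\ll (Y/d)^2$ for $D<d\le Y$ is unjustified: the number of solutions of $F(a,b)\equiv 0\pmod{d^2}$ is of size about $d^{2+\varepsilon}$, and the elementary lattice-point estimate $S_d(Y)\ll \rho(d)\bigl(Y/d^2+1\bigr)^2$ is only useful for $d$ up to roughly $Y^{2/3}$; beyond that the box $[0,Y]^2$ is smaller than the modulus $d^2$ and no equidistribution or ``geometric point-counting'' argument gives square-root-of-the-modulus cancellation --- that is precisely the hard regime, not a routine step. Second, the range of $d$ up to $Y^2$ cannot be reduced to counting integers with an unusually large square divisor: one must count lattice points $(a,b)$ for which $F(a,b)$ has a large square factor, and the $Y^2$ values $F(a,b)\ll Y^4$ are nothing like random integers, so Hooley/Greaves-type statements about integers near $X$ with square divisors near $\sqrt X$ do not apply as you invoke them. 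The actual treatments (Hooley for cubics, Greaves, and Gouv\^ea--Mazur's Theorem~3) handle this range by writing $F(a,b)=e\,z^2$ with $e\ll Y^4/d^2$ and bounding, uniformly in $e$, the number of points in the box on the curves $e\,z^2=F(a,b)$; it is exactly here that the hypothesis that every irreducible factor of $F$ has degree at most $3$ (i.e.\ $F=v\cdot f$ with $f$ cubic) is used, and the $(\log Y)^{-1/2}$ saving emerges from that analysis. Your sketch never uses this degree restriction anywhere --- but the analogous statement for forms with an irreducible factor of large degree is open, so an argument that does not invoke it cannot be complete. As written, the proposal reproduces the standard sieve skeleton but leaves the theorem's essential content unproved; for the purposes of this paper the correct move is the one the authors make, namely to cite Gouv\^ea--Mazur.
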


Let $E: y^2=x^3+a_4x+a_6$ be an elliptic curve, and let $\mathcal{P}$ be a set of linearly independent points in $E(\Q)$. We recall our three suitability conditions for a pair $(u,v)\in\Z^2$. First, we say that $(u,v)$ is \textit{map-suitable  for $E$} if $u, v, D_E(u,v), 3u^2+a_4v^2 > 0$ (cf.\@ Theorem~\ref{TheoremPhi}). Second, we say that $(u, v)$ is \textit{kernel-suitable  for $E$} if $v > 1$ and $\frac{d_E(u,v)}{v} > A_0v+u$, where $A_0:= \max\{A : (A,B)\in E_{\textrm{tor}}(\Q)\}$ (cf.\@ Corollary~\ref{injective}). Third, we say that $(u, v)$ is \(\varepsilon\)-\textit{bound-suitable for $\mathcal{P}$} if  $$K(uv+v^2)^{1+\varepsilon'} < d_E(u,v) < (v^2(uv+v^2))^{1+\varepsilon'},$$ where \(K := \big(\frac 14\big)^{1+\varepsilon'}e^{2(1+ \varepsilon')(\delta(E)+d(\mathcal{P}))}\) (cf.\@ Theorem~\ref{suitability}). We show that for large enough $n$ asymptotically one hundred percent of pairs $(x+ny,y)$ with $0 < x,y < Y$ satisfy all of these suitability conditions as $Y \to \infty$.

\begin{lemma}\label{lemma: bound and kernel suitable}
Fix $0 < \varepsilon < \frac12$. Let $A_n(Y)$ denote the number of pairs of integers $0<x,y<Y$ such that \((x+ny, y)\) is not simultaneously map-suitable and kernel-suitable for $E$, and \(\varepsilon\)-bound-suitable for $\mathcal{P}$. Then there exists an \(N > 0\) such that for all \(n > N\), we have $A_n(Y) = O_{\varepsilon}(Y^{2-\varepsilon})$.
\end{lemma}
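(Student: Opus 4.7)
The plan is to bound separately, for each of the three suitability conditions, the number of pairs $(x, y)$ with $0 < x, y < Y$ for which $(u, v) := (x + ny, y)$ fails that condition, and then combine via a union bound. The key observation is that $u/v = (x + ny)/y \geq n$ whenever $x, y \geq 1$, so taking $n$ large makes the leading cubic term dominate in $d_E(u, v)/v = u^3 + a_4 u v^2 - a_6 v^3$. In particular, many of the suitability conditions reduce to asking that the ratio $u/v$ exceed a constant depending only on $a_4$, $a_6$, $A_0$, $K$, and $\varepsilon$.

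The first step is to dispose of the easy conditions. Positivity of $u$ and $v$ is automatic, and each of the remaining map-suitability conditions $3u^2 + a_4 v^2 > 0$ and $D_E(u, v) > 0$ follows once $u/v$ exceeds a threshold depending only on $a_4, a_6$. Kernel-suitability requires $v > 1$, which excludes only the $O(Y)$ pairs with $y = 1$, together with the inequality $d_E(u, v)/v > A_0 v + u$, which again is automatic once $u/v$ is larger than some $N_1 = N_1(a_4, a_6, A_0)$. For the lower-bound half of bound-suitability, $K(uv + v^2)^{1+\varepsilon'} < d_E(u, v)$, I would use $u + v \leq 2u$ together with the asymptotic $d_E(u, v) \asymp v u^3$ (valid once $u/v \geq n$ with $n$ large) to reduce the condition to $v^{\varepsilon'} \ll u^{2-\varepsilon'}$, which once more follows from $u/v \geq n$ for $n$ sufficiently large. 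Thus for $n$ beyond some threshold $N$ depending on the data of $E$ and $\mathcal{P}$, the only pairs failing any of these conditions are the $O(Y)$ pairs with $y = 1$.

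The main obstacle is the upper-bound half of bound-suitability, $d_E(u, v) < (v^2(uv + v^2))^{1+\varepsilon'}$. Applying $u + v \leq 2u$ and $d_E(u, v) \asymp v u^3$, failure of this inequality forces a lower bound of the form
\[
u^{2-\varepsilon'} \gtrsim v^{2+3\varepsilon'}.
\]
Since $u = x + ny \leq (n+1)Y$, this confines $y$ to the range $y \lesssim_n Y^{(2-\varepsilon')/(2+3\varepsilon')}$, and pairing each such $y$ with at most $Y$ choices of $x$ yields
\[
\#\{\text{pairs failing the upper bound}\} \,\ll_n\, Y \cdot Y^{(2-\varepsilon')/(2+3\varepsilon')} \,=\, Y^{(4 + 2\varepsilon')/(2 + 3\varepsilon')}.
\]
The concluding algebraic check substitutes $\varepsilon' = \varepsilon/(1-\varepsilon)$ to simplify the exponent to $2 - 4\varepsilon/(2+\varepsilon)$, and since $4\varepsilon/(2+\varepsilon) \geq \varepsilon$ for all $0 < \varepsilon < 2$, this exponent is at most $2 - \varepsilon$. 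Combining with the $O(Y)$ contribution from the other failure modes gives $A_n(Y) = O_\varepsilon(Y^{2-\varepsilon})$ for every $n > N$, as required.
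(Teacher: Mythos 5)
Your proof is correct and takes essentially the same approach as the paper: choose $n$ large so that map-suitability (and, in your version, kernel-suitability and the lower half of bound-suitability) hold automatically apart from $O(Y)$ exceptional pairs, then bound failures of the upper inequality $d_E(u,v) < (v^2(uv+v^2))^{1+\varepsilon'}$ by showing they force $y \lesssim_n Y^{(2-\varepsilon')/(2+3\varepsilon')}$, which gives the exponent $\tfrac{4-2\varepsilon}{2+\varepsilon} \le 2-\varepsilon$ (the paper's version of this step works with $t=x/y$ and a case split, arriving at the same $Y^{2-\varepsilon}$ bound). One cosmetic slip: in that main step you need the lower bound $uv+v^2 \ge uv$ (not $u+v \le 2u$, which points the wrong way) to deduce $u^{2-\varepsilon'} \gtrsim v^{2+3\varepsilon'}$ from $d_E(u,v)\asymp vu^3$, but the displayed conclusion and the remaining computation are correct.
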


\begin{proof}
Let $G_n(x,y) := d_E(x+ny, y) = y(x^3 + 3nx^2y + (3n^2 + a_4)xy^2 + (n^3+a_4 n -a_6)y^3)$. There exists a constant $N > 0$ such that for $n> N$, each coefficient of $G_n(x,y)$ is positive. In particular, for $n > N$ and $x,y > 0$, we have $3(\frac{x+ ny}{y})^2 + a_4 > 3n^2 + a_4 > 0$ and $d_E(x+ny,y) > 0,$ so $(x+ny, y)$ is map-suitable for all $x, y > 0$. For brevity, we write
    $$G_n(x,y) =: y(x^3 + k_1(n) \,x^2 y + k_2(n)\, xy^2 + k_3(n)\, y^3).$$
%We show that $n > N$ implies that $(x+ny, y)$ is both bound-suitable and kernel-suitable for $E$ MOST OF THE TIME 

%**********************************
\begin{comment}
SOME INTUITION FOR THE BOX ARGUMENT HERE. Define the polynomial $q(x):= x^3 + a_4x - a_6$. Let $p$ be the largest real root of $q(x)$ such that $q(\frac{u}{v}) > 0$ for all $\frac{u}{v} > p$. Let $n \in \Z$ with $n > p$, and set $g(x) &:= q(x+h)$. We have
\begin{align*}
    g\Big(\frac{u}{v}\Big) &= q\Big(\frac{u}{v}+n\Big) \\
    &=\Big(\frac{u}{v}+n\Big)^3 + a_4\Big(\frac{u}{v}+n\Big)-a_6 \\
    &= \Big(\frac{u}{v}\Big)^3 + 3 \cdot \Big(\frac{u}{v}\Big)^2n + (3n^2+a_4n)\cdot \Big(\frac{u}{v}\Big) + (n^3 + a_4n - a_6)
\end{align*}
\end{comment}

The remaining suitability conditions are
\begin{enumerate}
    \item $G_n(x,y) > y(x+(n+A_0)\,y)$ and $y > 1$,
    \item $G_n(x,y) > K(y(x+(n+1)\,y))^{1+\varepsilon'}$, and
    \item $G_n(x,y) < (y^3(x+(n+1)\,y))^{1+\varepsilon'}.$
\end{enumerate}
First, consider integer pairs $(x,y)$ which do not satisfy property (1), i.e., $y = 1$ or
    $$x^3+k_1(n)\,x^2y+k_2(n)\,xy^2+k_3(n)\,y^3 \leq x +(n+A_0)\,y.$$
Dividing the inequality by $y^3$ yields
    $$\Big(\frac{x}{y}\Big)^3 + k_1(n) \cdot \Big(\frac{x}{y}\Big)^2 + \Big(k_2(n) - \frac{1}{y^2}\Big) \cdot \Big(\frac{x}{y}\Big) + \left(k_3(n) -\frac{n+A_0}{y^2}\right) \leq 0,$$
so the suitability condition fails only if $y \leq \max{\{1, k_2(n)^{-\frac{1}{2}}, \big(\frac{n+A_0}{k_3(n)}\big)^{\frac12}}\}$. Hence, the number of integer pairs $(x,y)$ which do not satisfy property (1) is $O(Y)$.

Second, suppose that $G_n(x,y) \leq K(y(x+(n+1)\,y))^{1+\varepsilon'}$. Because $x + (n+1)\,y \geq 1$ and $\varepsilon' \leq 1,$ we see that
    $$G_n(x,y) \leq K\,y^{1+\varepsilon'} (x + (n+1)\,y)^2.$$
Dividing both sides of the inequality by $y^4$ and simplifying, we get 
    $$\Big(\frac{x}{y}\Big)^3 + \Big(k_1(n) - \frac{K}{y^{1-\varepsilon'}}\Big)\cdot \Big(\frac{x}{y}\Big)^2 + \Big(k_2(n)-\frac{2K(n+1)}{y^{1-\varepsilon'}}\Big)\cdot \Big(\frac{x}{y}\Big) + \Big(k_3(n)-\frac{K(n+1)^2}{y^{1-\varepsilon'}}\Big) \leq 0.$$
Thus $y$ is similarly bounded and the number of pairs of integers $(x,y)$ which do not satisfy property (2) is $O(Y)$.

Finally, suppose that $G_n(x,y) \geq (y^3(x + (n+1)\,y))^{1+\varepsilon'}.$ Dividing through by $y^{4+4\varepsilon'}$ and setting $t:= \frac{x}{y},$ we see that
\begin{equation} \label{eq:SuitabilityUpperBound}
    \frac{1}{y^{4\varepsilon'}} \bigg(t^3 + k_1(n)\, t^2 + k_2(n)\, t + k_3(n)\bigg) -(t+n+1)^{1+\varepsilon'}\geq 0.
\end{equation}
First consider the case $t \leq 1,$ and let $C(n) := 1 + k_1(n) + k_2(n) + k_3(n).$ Then (\ref{eq:SuitabilityUpperBound}) implies that
    $$\frac{C(n)}{y^{4\varepsilon'}} - (n+1) \geq 0.$$
In particular, we have $y \leq \left(\frac{C(n)}{n+1}\right)^{\frac{1}{4\varepsilon'}},$ so the number of integer pairs $(x,y)$ with $x \leq y$ which do not satisfy property (3) is $O(Y).$ Now consider the case $t > 1$. Then (\ref{eq:SuitabilityUpperBound}) implies that
    $$C(n)\left(\frac{t}{y^{\varepsilon'}}\right)^3 - (n+1) \geq 0,$$
so that $y \leq \left( \frac{C(n)}{n+1} \right)^{\frac{1}{3 + 3\varepsilon'}} x^{\frac{1}{1+\varepsilon'}} < \left( \frac{C(n)}{n+1} \right)^{\frac{1-\varepsilon}{3}} Y^{1-\varepsilon}.$ Hence, the number of integer pairs $(x,y)$ with $x > y$ that do not satisfy property (3) is \(O_\varepsilon(Y^{2-\varepsilon})\).
\end{proof}

We now apply Theorem~\ref{Gouvea-Mazur} to count the number of negative fundamental discriminants satisfying the suitability conditions in Lemma \ref{lemma: bound and kernel suitable}. Let $E$ have conductor $N(E)$. Furthermore, for $0 < \varepsilon < \frac12$ and $\alpha \geq 0$, let $\mathcal{N}_Y(\varepsilon, \alpha, n)$ denote the set of pairs $(x, y) \in \Z^2$ with $0 < x,y < Y$ such that the following are true.
    \begin{enumerate}
        \item The pair $(x+ ny, y)$ is map-suitable and kernel-suitable for $E$.
        \item The pair $(x+ ny, y)$ is \(\varepsilon\)-bound-suitable and \((\varepsilon+\alpha)\)-bound-suitable for $\mathcal{P}$.
        \item The point $(-\frac{x}{y}-n, \frac{1}{y^2}) \in E_{-D_E(x+ny, y)}(\Q)$ has infinite order.
        \item The value $-D_E(x+ny, y)$ is a negative fundamental discriminant.
    \end{enumerate} 
Finally, for $\epsilon \in \{\pm 1\}$, let $\mathcal{M}_Y(\epsilon, \varepsilon, \alpha, n)$ denote the set of elements $(x, y)$ of $\mathcal{N}_Y(\varepsilon, \alpha, n)$ such that $d_E(x + ny, y)$ is coprime to $4N(E)$ and $\chi_{-d_E(x+ny, y)}(-N(E)) = \epsilon$, where $\chi_D$ is the quadratic Dirichlet character belonging to the field $\Q(\sqrt{D}).$
%We first find the cardinality of $\mathcal{N}_X$, and then argue that the fibers of the map $\gamma: \mathcal{N}_{\lambda X^{\frac{1}{4}}} \rightarrow \mathcal{D}_X$ sending $(x+ny,y) \mapsto -D_E(x+ny,y)$ have cardinality bounded above by $o(x^{\delta})$ for any $\delta>0$.
\begin{theorem} \label{Thm:CountN}
There exists a constant $A > 0$ and an $n \in \Z^+$ such that as $Y \to \infty$, we have \begin{equation*}
    \#\;\mathcal{N}_Y(\varepsilon,\alpha,n) = A\,Y^2 + O\left(\frac{Y^2}{\log(Y)^{1/2}}\right).
\end{equation*} Moreover, if $N(E)$ is not a perfect square and \(4N(E) \divides a_4, a_6\), then for \(\epsilon \in \{\pm 1\}\), there exists a constant $B > 0$ and an $n \in \Z^+$ such that as $Y \to \infty$, we have \begin{equation*}
    \#\;\mathcal{M}_Y(\epsilon, \varepsilon,\alpha, n) = B\,Y^2 + O\left(\frac{Y^2}{\log(Y)^{1/2}}\right).
\end{equation*}
\end{theorem}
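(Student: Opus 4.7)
The plan is to apply Theorem~\ref{Gouvea-Mazur} to the polynomial
$$G_n(x, y) := d_E(x+ny, y) = y\bigl((x+ny)^3 + a_4 (x+ny) y^2 - a_6 y^3\bigr),$$
which is of the form $v\cdot f(u,v)$ with $f$ a homogeneous cubic. The coefficient of $x^3 y$ in $G_n$ is $1$, so the gcd of its integer coefficients is $1$; and because $E$ is nonsingular the cubic $u^3+a_4 u - a_6$ has distinct roots in $\overline{\Q}$, so $u^3+a_4 u v^2-a_6 v^3$ is coprime to $v$ and squarefree, making $d_E(u,v)$ squarefree in $\Z[u,v]$. The unimodular substitution $(u,v)\mapsto(x+ny,y)$ preserves squarefreeness, so Theorem~\ref{Gouvea-Mazur} applies to $G_n$ for any $n\geq 1$.

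For the first claim, I would choose a suitable modulus $M$ and residue classes $(x_0,y_0)$ so that $G_n(x_0,y_0) \bmod 4$ lies in $\{1,2\}$; this is precisely the condition ensuring that $-D_E(u,v)=-4d_E(u,v)$ is a fundamental discriminant whenever $d_E$ is squarefree. Summing the Gouv\^ea--Mazur asymptotic over admissible classes yields a count $AY^2+O(Y^2/\log(Y)^{1/2})$ of pairs $(x,y)\in[1,Y]^2$ with $d_E(x+ny,y)$ squarefree and $-D_E(x+ny,y)$ a fundamental discriminant. To reduce to $\mathcal{N}_Y(\varepsilon,\alpha,n)$ we subtract the failures of its remaining defining conditions. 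Lemma~\ref{lemma: bound and kernel suitable} applied to both $\varepsilon$ and $\varepsilon+\alpha$ bounds the pairs failing map-, kernel-, or bound-suitability by $O_\varepsilon(Y^{2-\varepsilon})=o(Y^2)$ for $n$ sufficiently large. For condition (3), if $Q=(-u/v,1/v^2)$ were torsion on $E_{-D}(\Q)$, then via the twist isomorphism over $\Q(\sqrt{-D})$ it would correspond to a torsion point of $E(\Q(\sqrt{-D}))$ with the same $x$-coordinate $-u/v$; by uniform boundedness of torsion over quadratic fields, $-u/v$ must be a rational root of one of finitely many division polynomials $\psi_n$ of $E$, hence lies in a finite set $S_E\subset\Q$ depending only on $E$. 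The number of lattice pairs with $-(x+ny)/y\in S_E$ in $[1,Y]^2$ is $O(|S_E|Y)=O(Y)$, so $\#\mathcal{N}_Y(\varepsilon,\alpha,n)=AY^2+O(Y^2/\log(Y)^{1/2})$.

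For the second claim, we additionally impose $\gcd(d_E(x+ny,y),4N(E))=1$ and $\chi_{-d_E(x+ny,y)}(-N(E))=\epsilon$. Using $4N(E)\mid a_4,a_6$, we have $d_E(u,v)\equiv u^3 v\pmod{4N(E)}$, which reduces both conditions to conditions on $y_0(x_0+ny_0)\bmod 4N(E)$: the first to coprimality with $4N(E)$, the second to a Kronecker-symbol condition. The character $a\mapsto\chi_{-a}(-N(E))$ on $(\Z/4N(E)\Z)^\times$ is nontrivial precisely because $N(E)$ is not a perfect square, so both signs $\epsilon=\pm 1$ are realized by some admissible residue class; Gouv\^ea--Mazur summed over those classes, together with the reductions from the first part, yields $\#\mathcal{M}_Y(\epsilon,\varepsilon,\alpha,n)=BY^2+O(Y^2/\log(Y)^{1/2})$. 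The main obstacle is the combinatorial choice of residues mod $4N(E)$ that simultaneously guarantee squarefree fundamental discriminants, coprimality to $4N(E)$, and the prescribed sign of the Kronecker symbol, together with verifying that the relevant character is nontrivial so that both $\epsilon=\pm 1$ actually occur; once these are in place, the main analytic input is Theorem~\ref{Gouvea-Mazur}.
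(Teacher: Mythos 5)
Your outline matches the paper's proof: apply Theorem~\ref{Gouvea-Mazur} to $G_n(x,y)=d_E(x+ny,y)$ (squarefreeness from nonsingularity of $E$, content $1$), choose residue classes so that $G_n\equiv 1$ or $2\pmod 4$ to force fundamental discriminants, discard the suitability failures via Lemma~\ref{lemma: bound and kernel suitable} applied to both $\varepsilon$ and $\varepsilon+\alpha$, and handle the infinite-order condition by a torsion exclusion. Your treatment of condition (3) is a legitimate variant: you invoke uniform boundedness of torsion over quadratic fields and division polynomials to get an $O(Y)$ bound, whereas the paper simply cites Proposition~1 of Gouv\^ea--Mazur (only finitely many twists $E_{-D}$ have a torsion point of order $>2$, and the point $(-u/v,1/v^2)$ has nonzero $y$-coordinate); both work, yours with heavier input.

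The one place your argument is not complete is the sign-realization step for $\mathcal{M}_Y$, which you yourself flag as ``the main obstacle.'' Nontriviality of $a\mapsto\chi_{-a}(-N(E))$ on $(\Z/4N(E)\Z)^\times$ does not by itself give both signs among \emph{admissible} residues, because admissibility forces $d_E\equiv 1\pmod 4$ (so that $-4d_E$ is fundamental), and a character that factored through the mod-$4$ component would be constant on that coset. The paper closes exactly this point: it takes $n\equiv 0\pmod{4N(E)}$ so $d_E(x+ny,y)\equiv x^3y$, fixes $a_0\equiv b_0\equiv 1\pmod 4$, and then argues by cases --- $N(E)$ odd (the character is determined mod $N(E)$ and takes both signs since $N(E)$ is not a square, with the mod-$4$ condition imposed independently by CRT), $N(E)=2^kN_0$ with $k$ even (reduces to the odd case), and $k$ odd (choose $d_0$ in a suitable class mod $8$ with $d_0\equiv 1\pmod 4$ to control the $2$-adic part). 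You would need to supply this case analysis (or an equivalent verification that the character is nontrivial on the subgroup of residues $\equiv 1\pmod 4$) to complete the second assertion; the rest of your proposal is sound.
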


\begin{proof}
By applying Lemma~\ref{lemma: bound and kernel suitable} twice, we find that there exists an $N \in \Z^+$ such that for all $n > N$, there are $o(Y^2)$ pairs $(x,y)$ with $0< x,y < Y$ such that $(x+ny,y)$ is not simultaneously (i) map-suitable for $E$, (ii) kernel-suitable for $E$, and (iii) \(\varepsilon\)-bound-suitable and \((\varepsilon + \alpha)\)-bound-suitable for $\mathcal{P}$. Moreover, by Proposition 1 of \cite{GouveaMazur}, only finitely many $E_{-D_E(x+ny, y)}$ can have a torsion point of order greater than \(2\). Thus, to prove the first assertion, it suffices to show that the number of integer pairs $(x,y)$ with $0< x,y < Y$ such that $-D_E(x+ny,y)$ is a fundamental discriminant is $\gg Y^2.$
%negative fundamental discriminants represented by $G_n(x,y)$ is $\gg Y^2$.

We show that we can choose $n > N$ so that the result for $\mathcal{N}_Y(\varepsilon, \alpha, n)$ follows by applying Theorem~\ref{Gouvea-Mazur} to $G_n(x,y)$ with $M = 4.$ Observe that $\frac{G_n(x,y)}{y}$ is a homogeneous cubic polynomial with coefficients in $\mathbb{Z}$, and since $u \mapsto x+ny, v \mapsto y$ is an $\SL_2(\Z)$-transformation, $G_n(x,y)$ has no square factors as a polynomial in \(x, y\) if and only if $d_E(u,v)$ has no square factors as a polynomial in $u,v$. Note that $d_E(u,v) = v(u^3 + a_4 uv^2 - a_6 v^3) = -v^4 \big((-\frac{u}{v})^3 + a_4 (-\frac{u}{v}) + a_6\big)$ has a square factor only if $f(x) = x^3 + a_4x + a_6$ has a double root, which cannot occur since $E\colon\; y^2 = x^3+a_4 x + a_6$ is non-singular. The greatest common divisor of the coefficients of $G_n(x,y)$ is~\(1\), and one easily checks that regardless of the values of $a_4$ and $a_6$, there is a choice of $a_0, b_0, n \pmod 4$ such that $a_0$ and $b_0$ are odd and $G_n(a_0, b_0) \equiv 1$ or $2 \pmod 4$. This choice of $a_0$ and $b_0$ ensures that $-4G_n(x,y) = -D_E(x+ny,y)$ is a fundamental discriminant, and hence applying Theorem~\ref{Gouvea-Mazur} proves the first assertion.

To prove the second assertion, observe that in this case, we may choose \(n \equiv 0 \pmod{4N(E)}\) so that \(d_E(x+ny, y) \equiv x^3y \pmod{4N(E)}\). Note that $-D_E(x+ny, y)$ will be a fundamental discriminant if we choose $a_0 \equiv b_0 \equiv 1 \pmod{4}.$ Consequently, if we can choose \(d_0\) coprime to \(4N(E)\) such that \(\chi_{-d_0}(-N(E)) = \epsilon\), then \(d_E(x+ny, y)\) is coprime to \(4N(E)\) with \(\chi_{-d_E(x+ny, y)}(-N(E)) = \epsilon\) for all \(x\) and \(y\) in the congruence classes \(x \equiv 1 \pmod{4N(E)}\), \(y \equiv d_0 \pmod{4N(E)}\). If \(N(E)\) is odd, then \(\chi_{-d_0}(-N(E))\) is uniquely determined by the value of \(d_0 \pmod{N(E)}\) and achieves both signs for \(N(E)\) not a square, so we can always choose \(d_0 \equiv 1 \pmod{4}\) such that \(\chi_{-d_0}(-N(E)) = \epsilon\). Otherwise, write \(N(E) = 2^kN_0\). The case where \(k\) is even reduces to the former case. The case where \(k\) is odd can be addressed by choosing \(d_0\) to be in the correct residue class modulo \(8\) such that \(\chi_{-d_0}(-N_0) = -1\) and $d_0 \equiv 1 \pmod{4}$, so unless \(N(E)\) is a perfect square, we can always apply Theorem~\ref{Gouvea-Mazur} with \(a_0 :\equiv 1\) and \(b_0 :\equiv d_0 \pmod{4N(E)}\).
%We now claim that the constant $A$ is always non-zero. By Proposition 5(1) of \cite{GouveaMazur}, $A$ is zero if and only if $A_p = 0$ for some prime $p$. By part (5) of the same proposition, if $p$ does not divide all the coefficients of $G(x,y)$ and $p$ does not divide $M$, then $A_p$ is non-zero. Since the leading coefficient of $G(x,y)$ is one and $M = 4$, it follows that $A_p \neq 0$ for all odd primes $p$. Lastly, part (3)(i) of the proposition states that $A_2$ is zero if and only if $(a_0,b_0)$ is a solution to $G(x,y) \equiv 0$ mod $4$. Hence for $4\cdot G(x,y)$ to be a negative fundamental discriminant, we must have $G(x,y) \equiv 1$ or $2$ mod $4$. One easily checks that there is always a choice of $a_0,b_0,$ and $n$ (mod $4$) such that this condition is fulfilled. 
\end{proof}

\begin{proof}[Proof of Theorem \ref{BigTheorem}.] We give an argument similar to one used by Gouv\^ea and Mazur in \cite{GouveaMazur}. By Theorem~\ref{Thm:CountN}, there exists an \(n \in \Z^+\) such that \(\# \,\mathcal{N}_{\lambda(n) X^{1/4}}(\varepsilon_1,\alpha,n) = BX^{\frac12} + o(X^{\frac12})\), where $\lambda(n)$ is chosen so that every $-D \in \{-D_E(x+ny,y) : 0 < x,y < \lambda(n) X^{1/4}\} =: \mathcal{D}_X(n)$ satisfies $-X < -D < 0.$ 
%Let $\mathcal{D}_X(n)$ denote the set $\{-D_E(x+ny,y) : 0 < x,y < \lambda(n) X^{1/4}\}$, where $\lambda(n)$ is chosen so that every $-D \in \mathcal{D}_X(n)$ satisfies $-X < -D < 0$. By Theorem~\ref{Thm:CountN}, there exists an \(n \in \Z^+\) such that \(\# \,\mathcal{N}_{\lambda(n) X^{1/4}}(\varepsilon_1,n) = BX^{\frac12} + o(X^{\frac12})\). 
To show that for any $\varepsilon_2 > 0$, the number of discriminants represented by elements of \(\mathcal{N}_{\lambda(n) X^{1/4}}(\varepsilon_1,\alpha,n)\) is also asymptotically at least \(X^{\frac{1}{2} - \varepsilon_2}\), we consider the map \begin{align*}
\gamma: \mathcal{N}_{\lambda(n) X^{\frac14}}(\varepsilon_1,\alpha,n) &\rightarrow \mathcal{D}_X(n)\\
(u,v) &\mapsto -D_E(u,v).
\end{align*}
We show that the cardinality of the fibers of $\gamma$ is $o(X^{\varepsilon_2})$. Let $-D$ be a negative fundamental discriminant and \((u,v) \in \gamma^{-1}(-D)\), and observe that $D = 4v(u^3 + a_4 uv^2 - a_6v^3)$ implies \(v \divides D\). In addition, note that for each fixed $v,$ since \(D_E(u,v)\) is cubic in \(u\), there are at most three choices for $u$ that give \(D_E(u,v) = D\). Because the number of positive divisors $v$ of $D$ is $o(X^{\varepsilon_2})$ for any $\varepsilon_2 > 0,$ we see that the cardinality of the fibers of $\gamma$ is $o(X^{\varepsilon_2}).$ For each \((x, y) \in \mathcal{N}_{\lambda(n) X^{1/4}}(\varepsilon_1,\alpha,n)\), the pair $(x+ny, y)$ is map-suitable and kernel-suitable for $E$ as well as $\varepsilon_1$-bound-suitable and $(\varepsilon_1+\alpha)$-bound-suitable for $\mathcal{P}$. In particular, $\varepsilon_1$-bound-suitability ensures that the lower bound for the class number given in Theorem~\ref{BigTheorem} is satisfied, and $(\varepsilon_1+\alpha)$-bound-suitability ensures that $T_E(u, v, \varepsilon_1) = \frac{\alpha}{8}\log(d_E(u, v)) + T_E(u, v, \varepsilon_1 + \alpha) > \frac{\alpha}{8}\log(d_E(u, v)) + \frac{d(\mathcal{P})}{4}$. Consequently,
we see that every discriminant in the image of \(\mathcal{N}_{\lambda(n) X^{1/4}}(\varepsilon_1,\alpha,n)\) satisfies the necessary conditions.

%In particular, the fact that $(x+ny, y)$ is $(\varepsilon_1+\alpha)$-bound-suitable ensures that $T_E(x+ny, y, \varepsilon_1) > \frac{\alpha}{8}\log(d_E(u, v)) + \frac{d(E)}{4}$, and the fact that $(\varepsilon_1-\alpha)$-suitability is a stronger condition than $\varepsilon_1$-bound-suitability implies that the class number bound in Theorem~\ref{BigTheorem} holds.

%new 2nd point: check that TE(epsilon1) bound still works for h(-D), check that we can bound TE

To prove the second part of the theorem, let $\epsilon$ denote the sign of the functional equation for the \(L\)-function associated to $E$, and consider the map \begin{align*}
\gamma': \mathcal{M}_{\lambda(n) X^{\frac14}}(\epsilon, \varepsilon_1,\alpha,n) &\rightarrow \mathcal{D}_X(n)\\
(u,v) &\mapsto -D_E(u,v).
\end{align*} Then for each \(-D \in \operatorname{Im}(\gamma\,')\), the sign of the functional equation for the \(L\)-function associated to $E_{-D}$ is equal to $\chi_{-\frac{D}{4}}(-N(E)) \cdot \epsilon = \epsilon^2 = 1$. Since \(E_{-D}\) has a point of infinite order, assuming the Parity Conjecture, we may conclude that $E_{-D}$ has even rank at least~$2$.
\end{proof}

\newpage \noindent
\appendix
\section{Discussion of Theorem ~\ref{TheoremInfiniteFamilies} and Explicit Formulas for the Infinite Families of Elliptic Curves}\label{Appendix}

In this section, we outline how to compute $$c_{\text{min}}(\mathcal{P}) := \frac{|G|}{\sqrt{{R}_{\Q}(\mathcal{P})}}\cdot \Omega_{r_\min(\mathcal{E}_G)}$$ for each of the torsion groups listed in Table~\ref{tabel: torsion subgroup}, where $r_{\mathrm{min}}(\mathcal{E}_G) > 0$ is a lower bound for the rank of a given infinite family of elliptic curves. For simplicity, we consider the case in which the family of interest is parametrized by only one variable. 
%Since we are given a subgroup $G$ of $E_{\textrm{tor}}(\Q)$, we can use $r_{\mathrm{min}}(E)$ instead of $r$ in \eqref{eq: cE}. 
%We compute the lower bound 
%$$c_{\min}(\mathcal{P}) := \frac{|G|}{\sqrt{{R}_{\Q}(\mathcal{P})}}\cdot \Omega_{r_\min(E)},$$
%for $c_G(\mathcal{P})$, for which it suffices to find an upper bound for $R_{\Q}(\mathcal{P})$.

For a given infinite family of elliptic curves, we begin by computing\footnote{All computations were performed using \texttt{Sagemath} \cite{Sagemath}.} the short integral Weierstrass form $E^t: y^{2}  = x^{3} + a_4(t)\,x + a_6(t)$, along with the $x$-coordinates $\frac{r_i(t)}{s_i(t)}$ of the linearly independent points $P_i$ of infinite order on $E^t$. Since the N\'eron-Tate height pairing is an inner product, the matrix $(\langle P_i,P_j\rangle)_{1\leq i,j \leq r}$ is positive definite and symmetric. Hence, by Hadamard's inequality, the product of the diagonal entries $\langle P_i,P_i \rangle=\hat{h}(P_i)$ is an upper bound for $R_{\Q}(\mathcal{P})$. Thus, we have
\begin{equation}\label{eq: product}
   R_{\Q}(\mathcal{P}) = \det(\langle P_i,P_j\rangle)_{1\leq i,j \leq r} \leq \prod_i \langle P_i,P_i\rangle = \prod_i \hat{h}(P_i).
\end{equation}
To bound this product, we give an upper bound for each na\"ive height $H(P_i)$. Denote the coefficient of $t^k$ in the polynomial $q(t)$ by $q[t^k]$, and set $$p_i(t) := c_i\,(|t|+m_i)^{n_i},$$ where $n_i := \max\{\text{deg}(r_i),\text{deg}(s_i)\} $, $c_i := \max\{|r_i[t^{n_i}]|, |s_i[t^{n_i}]|\}$, and $m_i := \min\{b \in \Z^+: c_i \binom{n_i}{n}\,b^{n_i-n} \geq \max\{|r_i[t^{n}]|, |s_i[t^{n}]|\}, \text{ for all } 1\leq n \leq n_i \}$. Thus we obtain the upper bound $H(P_i) \leq p_i(t).$ Likewise, we obtain the bounds $$H(j(E^t)) \leq c_j\,(|t|+m_j)^{n_j} \qquad \text{and} \qquad H(\Delta(E^t)) \leq c_\Delta\,(|t| + m_{\Delta})^{n_{\Delta}}.$$
We now use these upper bounds to derive an upper bound for $\hat{h}(P_i)$ using a theorem of Silverman \cite[Theorem~1.1]{Silverman} that relates the Weil heights to the canonical heights. 

\begin{theorem}[Silverman]\label{thm: silverman}
If $P \in E(\Q)$, then $$-\frac18 h_W(j(E))-\frac{1}{12}h_W(\Delta(E))-0.973 \leq \hat{h}(P)-\frac12 h_W(P) \leq \frac{1}{12}h_W(j(E))+\frac{1}{12}h_W(\Delta(E))+1.07.$$
\end{theorem}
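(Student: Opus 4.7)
The plan is to reduce the inequality to a uniform two-sided bound on the \emph{doubling discrepancy} $\Delta_2(Q) := h_W(2Q) - 4\,h_W(Q)$ via the telescoping identity
\[
\hat{h}(P) - \tfrac{1}{2}h_W(P) = \tfrac{1}{2}\sum_{n=0}^{\infty}\frac{\Delta_2(2^n P)}{4^{n+1}},
\]
which comes from iterating the quadratic-growth definition of the canonical height. Summing the geometric series converts any bound $\Delta_2(Q) \leq c$ into a bound $\hat{h}(P) - \tfrac{1}{2}h_W(P) \leq \tfrac{c}{6}$ (symmetrically for lower bounds), so the entire problem reduces to analyzing one step of the doubling map.

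First, for the non-archimedean places, I would write $x(Q) = A/B$ in lowest terms and apply the duplication formula to express $x(2Q) = F(A,B)/G(A,B)$ for explicit quartic forms $F, G \in \Z[A,B]$ depending on $a_4, a_6$. A resultant computation shows $\gcd(F(A,B), G(A,B))$ divides a fixed integer multiple of $\Delta(E)$, which bounds the non-archimedean part of $\Delta_2(Q)$ symmetrically by $\tfrac{1}{2}h_W(\Delta(E))$ plus an absolute constant; summing the telescoping series contributes the $\tfrac{1}{12}h_W(\Delta(E))$ coefficient on both sides of the desired inequality. At the archimedean place, I would express the local canonical height via the Weierstrass $\sigma$-function on $\C/\Lambda$ and compare it to $\tfrac{1}{2}\log\max(|A|, |B|)$, invoking the $q$-expansion of $\sigma$ together with the relation $j(E) = q^{-1} + 744 + O(q)$ near the cusp. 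The behavior of $\sigma$ for nearly-degenerate lattices is not symmetric, and produces asymmetric bounds on $\Delta_2^{(\infty)}(Q)$ whose coefficients of $h_W(j(E))$ are $-\tfrac{3}{4}$ below and $\tfrac{1}{2}$ above; after dividing by $6$ these become the $-\tfrac{1}{8}$ and $\tfrac{1}{12}$ multipliers of $h_W(j(E))$ in the statement.

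The main obstacle is the determination of the explicit numerical constants $-0.973$ and $1.07$, which are sharp rather than merely $O(1)$. Extracting them requires carefully maximizing and minimizing the archimedean discrepancy as a function on a fundamental domain for $\SL_2(\Z)$ acting on the upper half-plane, handling the difference between the given (possibly non-minimal) Weierstrass model and a minimal one, and aggregating the local error terms from each bad prime. I would follow Silverman's original strategy of combining the telescoping argument above with an explicit numerical optimization of the $\sigma$-function estimate at infinity to certify the stated constants.
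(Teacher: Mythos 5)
This theorem is not proved in the paper at all: it is Theorem~1.1 of Silverman's 1990 paper (the reference \cite{Silverman} in the bibliography), quoted verbatim and used as a black box, so there is no internal argument to compare yours against. Judged as a standalone proof, your sketch has a genuine gap. The telescoping identity and the reduction to a one-step bound on $\Delta_2(Q)=h_W(2Q)-4h_W(Q)$ are fine, but the quantitative claims you attach to that step are asserted rather than derived, and they do not come out as stated. The non-archimedean cancellation in one doubling step is controlled by the resultant-type identities for the homogenized duplication polynomials, which produce a bound on the gcd of size roughly $|4a_4^3+27a_6^2|$, i.e.\ of order $h_W(\Delta(E))+O(1)$, not $\tfrac12 h_W(\Delta(E))$; feeding that into the geometric series gives a coefficient near $\tfrac16$ (or worse) on $h_W(\Delta(E))$, well short of the $\tfrac1{12}$ in the statement. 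Likewise the archimedean coefficients $-\tfrac34$ and $\tfrac12$ of $h_W(j(E))$ for the one-step discrepancy are simply announced, with no argument that they hold uniformly (in particular near the cusp of the fundamental domain and near points where $x(2Q)$ degenerates), and the numerical constants $-0.973$ and $1.07$ --- which are the entire content of the theorem beyond a qualitative $O(1)$ statement --- are explicitly deferred to ``Silverman's original strategy,'' i.e.\ to the very result being proved.

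It is also worth noting that Silverman's actual proof does not go through the doubling discrepancy at all: he decomposes $\hat h$ into local N\'eron height functions $\lambda_v$, bounds $\lambda_v(P)-\tfrac12\log^{+}|x(P)|_v$ place by place (non-archimedean places via reduction theory and division-polynomial identities, the archimedean place via $q$-expansions of the Weierstrass $\sigma$-function and explicit estimates on a fundamental domain), and then sums over places; this is what produces the asymmetric $-\tfrac18$ versus $\tfrac1{12}$ coefficients and the sharp constants. Within the present paper the correct move is simply to cite \cite{Silverman}; if you want to prove the bound yourself, you would need to carry out that local-height analysis in full, since the doubling/telescoping route as sketched cannot reach the stated coefficients.
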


\noindent
Let $m := \max(\{m_i\}_{i\,=\,1}^{r} \cup \{m_j, m_{\Delta} \})$. Then we have
\begin{equation*}
    \hat{h}(P) \leq \left(\frac{1}{12}\log(c_j) + \frac{1}{12}\log(c_\Delta) + \frac{1}{2}\log(c_i) + 1.07\right) + \left(\frac{1}{12}n_j + \frac{1}{12}n_\Delta + \frac{1}{2}n_i\right)\log(|t|+m).
\end{equation*}
Hence, we obtain an upper bound for $R_{\Q}(\mathcal{P})$. Finally, we choose $\mu$ so that $(\log(|t|+m)+\mu)^r$ is an upper bound for the product in \eqref{eq: product}. We then take the square root of this monomial to obtain $c_{\text{min}}(\mathcal{P})$, as listed in Table~\ref{tabel: torsion subgroup}.

To see explicit formulas for each of the infinite families and the $x$-coefficients of their linearly independent points of infinite order, as well as the code created to find these polynomials and the values listed in Table~\ref{tabel: torsion subgroup}, please visit \url{https://github.com/team-class-numbers/elliptic-curve-families}.

\newpage

\end{document}